\DeclareMathAlphabet{\pazocal}{OMS}{zplm}{m}{n}
\DeclareMathAlphabet{\mathcal}{OMS}{cmsy}{m}{n}
\SetMathAlphabet{\mathcal}{bold}{OMS}{cmsy}{b}{n}
\newcommand{\R}{\mathbb{R}}
\newcommand{\C}{\mathbb{C}}
\newtheorem*{thmnonum1}{Theorem A}
\newtheorem*{thmnonum2}{Theorem B}
\newtheorem{theorem}{Theorem}
\newtheorem{corollary}{Corollary}
\newtheorem{lemma}{Lemma}
\newtheorem{proposition}{Proposition}
\newtheorem{remark}{Remark}
\numberwithin{equation}{section}
\begin{document}
\title{$(P,Q)$ complex hypercontractivity}
\author[P.~Ivanisvili]{Paata Ivanisvili}
\author[P.~Kalantzopoulos]{Pavlos Kalantzopoulos}

\address{Department of Mathematics, University of California Irvine, Irvine, CA
}
\email{pivanisv@uci.edu \textrm{(P.\ Ivanisvili)}}

\address{Department of Mathematics, University of California Irvine, Irvine, CA
}
\email{pkalantz@uci.edu  \textrm{(P.\ Kalantzopoulos)}}
\makeatletter
\@namedef{subjclassname@2010}{
  \textup{2010} Mathematics Subject Classification}
\makeatother
\subjclass[2010]{42B20, 42B35, 47A30, 42A38}
\keywords{}
\begin{abstract} 
Let $\xi$ be the standard normal random vector in $\mathbb{R}^{k}$. Under some mild growth and smoothness assumptions on any increasing   $P, Q : [0, \infty)  \mapsto [0, \infty)$  we show $(P,Q)$ complex hypercontractivity  
$$
Q^{-1}(\mathbb{E} Q(|T_{z}f(\xi)|))\leq P^{-1}(\mathbb{E}P(|f(\xi)|))
$$
holds for all polynomials $f:\mathbb{R}^{k} \mapsto \mathbb{C}$, where $T_{z}$ is the hermite semigroup at complex parameter $z, |z|\leq 1$, if and only if 
\begin{align*}
\left|\frac{tP''(t)}{P'(t)}-z^{2}\frac{tQ''(t)}{Q'(t)}+z^{2}-1\right|\leq \frac{tP''(t)}{P'(t)}-|z|^{2}\frac{tQ''(t)}{Q'(t)}+1-|z|^{2}
\end{align*}
holds for all $t>0$ provided that $F''>0$, and $F'/F''$ is concave, where $F = Q\circ P^{-1}$.  This extends Hariya's result from real to complex parameter $z$. Several old and new applications are presented for different choices of $P$ and $Q$. The proof uses heat semigroup arguments, where we find  a certain map $C(s)$, which interpolates the inequality at the endpoints. The map $C(s)$ itself is composed of  four heat flows running together at different times. 
  \end{abstract}
\maketitle

\section{Introduction}\label{Introduction}
We denote the  Gaussian  on $\R^k$  by
\begin{equation*}
	d\gamma(x)=\frac{1}{(2\pi)^{\frac{k}{2}}}e^{-|x|^2/2}\,dx,
\end{equation*}
where $|x|$ is the Euclidean norm of $x=(x_1,\ldots,x_k)\in\R^k$. For $p>0$ we set 
\begin{equation*}
	\|f\|_p:=\left(\int_{\R^k}|f|^p\,d\gamma\right)^{\frac{1}{p}} 
\end{equation*}
for all Borel measurable functions $f :\mathbb{R}^{k} \mapsto \mathbb{C}$.  Given a multiindex $\alpha=(\alpha_1,\ldots,\alpha_k)\in(\mathbb{Z}_+)^k$ we define probabilist's Hermite polynomials on $\mathbb{R}^{k}$ as follows 
\begin{equation*}
H_{\alpha}(x)=\int_{\mathbb{R}^{k}}(x+iy)^{\alpha}d\gamma(y),
\end{equation*}
where $w^{\alpha} := w_{1}^{\alpha_{1}}\ldots w_{k}^{\alpha_{k}}$ for any $w=(w_{1}, \ldots, w_{k}) \in \mathbb{C}^{k}$. 
The family $\{H_{\alpha}\}_{\alpha \in (\mathbb{Z}_{+})^{k}}$ forms an orthogonal basis in $L^{2}(\mathbb{R}^{k}, d\gamma)$ equipped with the natural innear product structure. 

Any polynomial $f:\R^k\to \C$ admits a representation 
\begin{equation}\label{P expans Ha}
	f(x)=\sum_{|\alpha|\leq \deg f}c_{\alpha}H_{\alpha}(x),
\end{equation}
for some $c_{\alpha}\in \C$,  where $|\alpha|=\alpha_1+\ldots+\alpha_k$.  For any  complex number $z\in \C$, $|z|\leq 1$,    Mahler transform $T_{z}$ is an operator that acts on $f$ as 
\begin{equation*}
	T_zf(x)=\sum_{|\alpha|\leq\deg f}z^{|\alpha|}c_{\alpha}H_{\alpha}(x).
\end{equation*}
Clearly $T_{0}f(x) = \int_{\mathbb{R}^{k}} f d\gamma$, and $T_{1}f(x)=f(x)$. When $z=r$, $r \in [-1,1]$, the operator $$
T_{r}f(x) = \int_{\mathbb{R}^{k}} f(xr+\sqrt{1-r^{2}}y) d\gamma(y)
$$
 is called the noise operator (or Ornstein--Uhlenbeck semigroup when parametried by $r=e^{-t}$). Perhaps one of the most fundamental results in Gauss space is the hypercontractivity due to Bonami \cite{Bonami}, Nelson \cite{Nelson}, and Gross \cite{Gross} which says that  for any $p\geq 1$ we have 
\begin{align}\label{har1}
\| T_{r} f \|_{(p-1)/r^{2}+1} \leq \|f\|_{p} \quad \text{holds for all} \quad r \in [-1,1].
\end{align}

Recently \cite{Hariya} Hariya  showed that for any $r \in (0,1]$
\begin{align}\label{har2}
    r^{2}\log\left(\int_{\mathbb{R}^{k}}\exp\left(\frac{T_{r} h}{r^{2}}\right) d\gamma\right) \leq \log\left( \int_{\mathbb{R}^{k}}\exp(h) d\gamma \right)
\end{align}
holds for any Borel measurable $h :\mathbb{R}^{k} \mapsto \mathbb{R}$ with $\|h\|_{1}+\|e^{h}\|_{1}<\infty$. Moreover, using stochastic calculus Hariya unified both of the statements (\ref{har1}) and (\ref{har2}) into one compact and elegant inequality.  
\begin{thmnonum1}[Hariya \cite{Hariya}]\label{harth}
Let nonnegative $P \in C([0, \infty))\cap C^{2}((0, \infty))$ be sucht that $P'>0$ on $(0, \infty)$. Assume 
\begin{align*}
P''(t)>0,  \quad \text{and} \quad t \mapsto \frac{P'(t)}{P''(t)} \quad \text{is concave on} \quad (0, \infty). 
\end{align*}
Then for any $r \in (0,1]$, and $Q(t) = \int_{0}^{t} (P'(s))^{1/r^{2}}ds$, we have
\begin{align}\label{har3}
Q^{-1}\left(\int_{\mathbb{R}^{k}} Q(T_{r}f) d\gamma\right) \leq P^{-1}\left( \int_{\mathbb{R}^{k}} P(f)d\gamma\right)
\end{align}
holds for all measurable $f\geq 0$ with  $\| P(f)\|_{1}<\infty$. 
\end{thmnonum1}

Notice that the choices $P(t)=t^{p-1},$ and $P(t)=e^{t},$ recover the statements (\ref{har1}) and (\ref{har2}) correspondingly. In \cite{Hariya} Hariya illustrated applications of his Theorem~\ref{har3} for various choices of $P$. By {\em differentiating} the inequality (\ref{har3}) at point $r=1$, Hariya obtained a {\em general} $P$-log-Sobolev ineqiality, we refer the reader to \cite{Hariya} for full details.

The goal of this paper is to extend Hariya's result to complex parameter $r$. 
\begin{theorem}\label{mainth1}
     Let nonnegative $Q, P \in C^{2}((0, \infty))\cap C([0, \infty))$ be such that $Q', P'>0$ on $(0, \infty)$, and both $P$ and $F:=Q\circ P^{-1}$ satisfy growth  condition (\ref{growthC}). If  
 \begin{align}\label{convexityp}
    F''(t)>0 \quad \text{and} \quad t \mapsto F'(t)/F''(t) \quad \text{is concave on} \quad (0,\infty), 
\end{align}
and the local condition 
\begin{align}\label{localpp}
    	\left|\frac{tP''(t)}{P'(t)}-z^{2}\frac{tQ''(t)}{Q'(t)}+z^{2}-1\right|\leq \frac{tP''(t)}{P'(t)}-|z|^{2}\frac{tQ''(t)}{Q'(t)}+1-|z|^{2}
\end{align}
 holds for all $t>0$, and some $z \in \mathbb{C}$, $|z|\leq 1$,   then we have
 \begin{align}\label{globalpp}
		Q^{-1}\left(\int_{\mathbb{R}^{k}} Q(|T_{z} f|) d\gamma \right) \leq P^{-1}\left(\int_{\mathbb{R}^{k}} P(|f|)d\gamma \right).
 \end{align}
 holds for all polynomials $f : \mathbb{R}^{k} \mapsto \mathbb{C}$, and all $k\geq 1$.
\end{theorem}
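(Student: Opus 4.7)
The plan is to adapt Hariya's semigroup-interpolation proof of Theorem A, handling the complex modulus via the identity $|T_{z}f|^{2} = (T_{z}f)(T_{\bar z}\bar f)$, which holds because $H_{\alpha}$ is real-valued on $\R^{k}$. Since $|T_{z}f|$ itself is not the image of $f$ under any single linear semigroup, I cannot evolve it directly; instead I propagate the holomorphic and antiholomorphic factors in parallel. This accounts for the \emph{four} heat flows promised in the abstract: two of them carry $T_{z}f$ and $T_{\bar z}\bar f$ (whose product is $|T_{z}f|^{2}$), and two more act on the ``mass'' data built from $|f|$ (for instance $P(|f|)$ and $f\bar f$), all coupled so that their joint evolution interpolates between the two sides of (\ref{globalpp}).

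Concretely I would set $C(s) = G^{-1}\!\bigl(\int_{\R^{k}} \Phi(u_{1},u_{2},u_{3},u_{4})\,d\gamma\bigr)$, where each $u_{i}(s,\cdot) = e^{t_{i}(s)\Delta/2} g_{i}$ is a Euclidean-heat evolution of an initial datum $g_{i}$ built from $f$, $\bar f$, $T_{z}f$, $T_{\bar z}\bar f$, and four times $t_{i}(s)$ are coupled via the Mehler kernel representation of $T_{z}$ so that $C(0)$ is the right-hand side and $C(1)$ the left-hand side of (\ref{globalpp}); the endpoint identifications are then direct bookkeeping with Mehler's formula. Monotonicity of $C(s)$ follows by computing $C'(s)$ using the heat equations $\partial_{s} u_{i} = \tfrac{1}{2} t_{i}'(s) \Delta u_{i}$, integrating by parts against $d\gamma$, and writing the result as a Gaussian integral of a Hermitian quadratic form $\langle A(s,x)\nabla u, \nabla u\rangle$ in the four complex gradients $\nabla u_{1},\dots,\nabla u_{4}$, with coefficients depending on $P''/P'$, $Q''/Q'$, and $z$.

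The argument closes by showing $A(s,x) \preceq 0$ pointwise. The convexity hypothesis (\ref{convexityp}) on $F = Q \circ P^{-1}$ controls the diagonal block of $A$ exactly as in Hariya's real-parameter proof --- this is the Bakry--\'Emery step where concavity of $F'/F''$ is used. The local condition (\ref{localpp}) is precisely the requirement that the residual $2\times 2$ cross block between the $T_{z}f$ and $T_{\bar z}\bar f$ flows be negative semidefinite: its left-hand side is the modulus of the off-diagonal entry of that block, its right-hand side controls the diagonal entries, and the phase of $z$ accounts for the $z^{2}$ factors, which is why (\ref{localpp}) takes a Cauchy--Schwarz/triangle-inequality form. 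The main obstacle, and the only genuinely new ingredient compared to Hariya, is engineering the four flows and the integrand $\Phi$ so that $A$ decouples into these two blocks with such clean content; once the decoupling is arranged correctly, the endpoint identifications and the nonpositivity of $A$ follow by direct computation, and, since the theorem concerns only polynomials, no approximation step is needed.
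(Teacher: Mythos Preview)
Your plan misidentifies what the ``four heat flows'' are. They are not four parallel heat evolutions $u_i = e^{t_i(s)\Delta/2}g_i$ of four separate data $g_i$, combined pointwise via some $\Phi$ and then integrated once against $d\gamma$. The paper's interpolant is \emph{nested}:
\[
C(s)=\int_{\R^k}F\!\left(\int_{\R^k}P\Big(\Big|\int_{\R^k}\!\int_{\R^k}\ell\big(\sqrt{s}(u+iv)+z\sqrt{1-s}(x+iy)\big)\,d\gamma(v)\,d\gamma(y)\Big|\Big)\,d\gamma(u)\right)d\gamma(x),
\]
where $\ell$ is the holomorphic polynomial with $f(x)=\int\ell(x+iy)\,d\gamma(y)$. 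The four Gaussian integrals (in $v,y,u,x$, with variances $s,1-s,s,1-s$) sit at three different nesting depths. This nesting is not cosmetic: after differentiating in $s$ via the heat equation, the term carrying $F''/F'$ involves the \emph{middle} integral $I^u_s$, and hypothesis (\ref{convexityp}) enters not as a Bakry--\'Emery bound on a diagonal block but as two applications of Jensen's inequality (concavity of $F'/F''$, then convexity of $(x,y)\mapsto y^2/x$) that push $I^u_s$ inward, reducing everything to a \emph{pointwise} scalar inequality in $g$ and a single partial derivative $g_j$. A single-integral ansatz $G^{-1}\!\bigl(\int\Phi\,d\gamma\bigr)$ has no middle layer in which this Jensen step can live.

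There is also no $4\times4$ Hermitian form splitting into two $2\times2$ blocks. What actually happens is that the second derivatives $g_{jj}$ produced by $\Delta_u$, $\Delta_x$, and $\partial_s g$ cancel exactly (the paper calls this ``miraculous''), leaving after the Jensen step the scalar quantity
\[
\Psi = -\frac{F''(M(x))}{F'(M(x))}\bigl(M'(x)\,\Re(zw)\bigr)^2 + M''(x)\bigl((\Re w)^2-(\Re zw)^2\bigr) + \frac{1-|z|^2}{2}\,\frac{M'(x)}{x}|w|^2,
\]
with $M(x)=P(\sqrt x)$ and $w=g_j\bar g$; the local condition (\ref{localpp}) is precisely $\Psi\ge0$ for all $x>0$ and $w\in\C$, obtained after optimizing over the phase of $w$. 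Your decoupled-block picture does not anticipate this cancellation, and without it the second derivatives of $g$ survive and $C'(s)$ cannot be made sign-definite. Finally, an approximation step \emph{is} needed: the proof replaces $M(x)$ by $M(x+\varepsilon)$ to keep $M',M''$ and $F'(M),F''(M)$ well defined at $x=0$, and sends $\varepsilon\to0$ at the end using the growth condition (\ref{growthC}).
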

\begin{remark}
 Theorem~\ref{mainth1} will be proved, see Remark~\ref{remarka1},  with (\ref{convexityp}) replaced by a weaker assumption 
 \begin{align}\label{wconvexity}
     (t,y) \mapsto \frac{F''(t)}{F'(t)}y^{2} \quad \text{is concave on} \quad (0,\infty)\times \mathbb{R},
 \end{align}
 which also covers the case when $F''\equiv 0$. However, it will be explained, see Proposition~\ref{utver1},  that if $F''>0$ then (\ref{convexityp}) and (\ref{wconvexity}) are equivalent. 
\end{remark}
We will see that when $z$ is real Theorem~\ref{mainth1} recovers Hariya's result under a mild growth assumption on $P$ together with $P \in C^{4}((0,\infty))$. We will also see that for $P(t)=t^{p}$ and $Q(t)=t^{q}$ and $z$ is complex,   Theorem~\ref{mainth1} recovers complex hypercontractivity in Gauss space due to Weissler \cite{Wei} (for all exponents $1<p\leq q<\infty$ except $2<p\leq q \leq 3$ and its dual),  Janson \cite{Jans}, Lieb \cite{Lieb},  and Epperson \cite{Epp} (Epperson  had a gap in his proof which was later fixed by Janson), and in particular Hausdorff--Young inequality with sharp constants due to Beckner \cite{Beckner}.

Our approach is inspired by the work Ivanisvili-Volberg \cite{IvanVol},  it is based on the observation that under the assumptions \eqref{convexityp} and (\ref{localpp}), the map 
\begin{align}
&C(s)=\int_{\R^k}F\left(\int_{\R^k}P\Big(\Big|\int_{\R^k}\int_{\R^k}\right. \label{otob}\\
&\left.\sum_{|\alpha|\leq \mathrm{deg}(f)}c_{\alpha}(\sqrt{s}(u+iv)+z\sqrt{1-s}(x+iy))^{\alpha}\,d\gamma(v)\,d\gamma(y)\Big|\Big)\,d\gamma(u)\right)\,d\gamma(x) \nonumber
\end{align}
is nondecreasing on $[0,1]$, and it interpolates the inequality (\ref{globalpp}) at the endpoints $s=0$ and $s=1$. The verification of $C'(s)\geq 0$  is done by a direct differentiation\footnote{In general $C'(s)$ may not  exist, so in the formal proof we work with a {\em smoothened} version of $C(s)$.}, see Section~\ref{Proof} for the details.  In Section~\ref{construction} via heuristic arguments we  explain how the map $C(s)$ was discovered. 

The assumption (\ref{convexityp}) may seem a bit artificial. It simply means  that  
\begin{align*}
	\mathfrak{M}_{F}(h) = F^{-1}\left(\int_{\mathbb{R}^{n}} F (h) d\gamma\right) \quad \text{is convex}
\end{align*} 
 as a functional acting on nonnegative functions $h$. For example, Theorem 106 in \cite{HLP}, page 88,  says that if $F, F', F''>0$,  then the functional $\mathfrak{M}_{F}$ is convex if and only if $F'/F''$ is concave. This assumption also appeared in the statement of Hariya's theorem, and one should think of it as {\em substitute} of Minkowski's inequality used in the proofs of complex hypercontractivity on the hypercube Beckner \cite{Beckner}, Weissler \cite{Wei}, see also Section~\ref{construction} for more details.

The local condition (\ref{localpp}) turns out to be necessary for the inequality (\ref{globalpp}) to hold for all polynomials $f$, this is explained in Section~\ref{localpp}. Another way to think about our Theorem~\ref{mainth1} is that under the assumption (\ref{convexityp}) the two inequalities, the local  (\ref{localpp}) and the global (\ref{globalpp}),  are equivalent to each other.

\section{Applications}
\subsection{Hariya's inquality}
Let $P$ satisfy the assumptions in Hariya's Theorem~\ref{harth}. Additionally assume $P \in C^{4}((0,\infty))$ and $P$ satisfies the growth condition (\ref{growthC}). 

Let $z=r \in (0,1)$. In this case the local condition (\ref{localpp}) simplifies to 
\begin{align}\label{haha}
\frac{Q''(t)}{Q'(t)}r^{2}\leq \frac{P''(t)}{P'(t)}
\end{align}
holds for all $t>0$. The choice $Q(t) = \int_{0}^{t} (P'(s))^{1/r^{2}}ds$ gives equality in (\ref{haha}). 

Clearly $F(t)=Q(P^{-1}(t))$ satisfies growth condition (\ref{growthC}). Next we check (\ref{convexityp}). We have 
\begin{align*}
&F'(t) = \frac{Q'(P^{-1}(t))}{P'(P^{-1}(t))}=P'(P^{-1}(t))^{1/r^{2}-1},\\
&F''(t) = \left(\frac{1}{r^{2}}-1\right)P'(P^{-1}(t))^{1/r^{2}-3} P''(P^{-1}(t)).
\end{align*}
Thus $F''(t)>0$ follows from $P''(t)>0$. Notice that 
\begin{align*}
\frac{F'(t)}{F''(t)}=\frac{r^{2}}{1-r^{2}}\frac{[P'(P^{-1}(t))]^{2}}{P''(P^{-1}(t))}.
\end{align*}
Let $T = P^{-1}(t)$. We have 
\begin{align*}
\frac{\partial}{\partial t}\frac{[P'(P^{-1}(t))]^{2}}{P''(P^{-1}(t))} = \frac{2P'(T)P''(T)}{P''(T)P'(T)} - \frac{P'(T)^2 P'''(T)}{P''(T)^{2} P'(T)} = 2-\frac{P'(T) P'''(T)}{P''(T)^{2}}.
\end{align*}
On the other hand we have 
\begin{align*}
\frac{\partial}{\partial t} \frac{P'(t)}{P''(t)} = 1 - \frac{P'(t)P'''(t)}{P''(t)^{2}}.
\end{align*}
Since $\frac{\partial }{\partial t} T  = 1/P'(T) >0$ it follows that the sign of $\frac{\partial^{2}}{\partial t^{2}} \frac{P'(t)}{P''(t)}$ coincides with the sign of $\frac{\partial^{2}}{\partial t^{2}}\frac{F'(t)}{F''(t)}$, hence, by Theorem~\ref{mainth1} the inequality (\ref{globalpp}) holds for all polynomials $f : \mathbb{R}^{k} \mapsto \mathbb{C}$. 

\subsection{Complex hypercontractivity,  
Babenko--Beckner, and Hirschman}
Let $1\leq p \leq q$. Choose $Q(t)=t^{q}$ and $P(t)=t^{p}$. Then both $F(t) = t^{q/p}$ and $P(t)=t^{p}$ satisfy the assumptions of Theorem~\ref{mainth1}. The local condition (\ref{localpp}) takes the form 
\begin{align}\label{weis1}
|p-2-z^{2}(q-2)|\leq p-|z|^{2}q.
\end{align}
Thus we obtain that $\|T_{z} f\|_{q} \leq \|f\|_{p}$ holds for all polynomials $f$ if and only if (\ref{weis1}) holds. This recovers the complex hypercontractivity due to Weissler \cite{Wei}, Epperson \cite{Epp} (which had a gap in the proof and later was fixed by Janson \cite{Jans} via different arguments), and Lieb~\cite{Lieb}. 

If we choose $q=\frac{p}{p-1}$, $z=i\sqrt{p-1}$, where $p\in (1,2]$, then we have equality in (\ref{weis1}), and hence (\ref{weis1}) holds true. This particular case was proved by Beckner~\cite{Beckner} who also showed that the inequality 
\begin{align}\label{HY}
\| T_{i\sqrt{p-1}}f\|_{q}\leq \|f\|_{p}
\end{align}
is (after a change of variables) the same as Hausdorff--Young inequality 
\begin{align}\label{HY1}
 \left(\int_{\mathbb{R}^{k}}|\widehat{g}(y)|^{q}dy\right)^{1/q} \leq   \left(\frac{p^{1/p}}{q^{1/p}}\right)^{k/2}\left(\int_{\mathbb{R}^{k}}|g(y)|^{p}dy\right)^{1/p}, 
\end{align}
with the sharp constant, where $\widehat{g}(y) = \int_{\mathbb{R}^{k}}e^{-2\pi i x\cdot y}g(x)dx$, and $x\cdot y$ denotes innear producr in $\mathbb{R}^{k}$. For $p=2$ we have equality in (\ref{HY1}). Differentiating the estimate (\ref{HY1}) at point $p=2$ gives Hirschman's uncertainty principle $H(|f|^{2})+H(|g|^{2})\geq \log(e/2)$, where $H(|f|^{2})$ denotes the Shannon entropy $-\int_{\mathbb{R}} |f|^{2}(x) \log(|f|^{2}(x))dx$ , see \cite{Beckner}. 

\subsection{The degenerate case $F'' \equiv 0$} Let us choose $Q=P$ in Theorem~\ref{mainth1}. Then $F(t)=t$, and hence $F''\equiv 0$. The local condition (\ref{localpp}) takes the form 
\begin{align}\label{degloc}
|1-z^{2}| \left|\frac{tP''(t)}{P'(t)}-1\right|\leq \left(\frac{tP''(t)}{P'(t)}+1 \right)(1-|z|^{2})
\end{align}
holds for all $t>0$. Consider the case $z\neq \pm 1$. Then (\ref{degloc}) implies that $P''(t)>0$ on $(0,\infty)$. 
Let $K =\frac{tP''(t)}{P'(t)}$ and $L = \frac{|K-1|}{K+1}$. Then (\ref{degloc}) rewrites as 
\begin{align}\label{deglocp}
1-|z|^{2} \geq L|1-z^{2}|.
\end{align}
Squaring and subtracting $(1-|z|^{2})^{2}L^{2}$ from boths sides of the inequality (\ref{deglocp}), we obtain 
\begin{align*}
(1-|z|^{2})^{2}(1-L^{2})\geq L^{2} (|1-z^{2}|^{2}-(1-|z|^{2})^{2})=L^{2}\left(\frac{z-\bar{z}}{i}\right)^{2}.
\end{align*}
So taking the square root in the last inequality we obtain 
\begin{align}\label{deglocpp1}
1-|z|^{2} \geq \frac{2|L|}{\sqrt{1-L^{2}}}|\Im z|.
\end{align}
Next, notice that 
\begin{align*}
\frac{2|L|}{\sqrt{1-L^{2}}} = \sqrt{K+\frac{1}{K}-2},
\end{align*}
So if we let 
\begin{align*}
c_{P} = \sup_{t \in (0,\infty)} \left\{\frac{tP''(t)}{P'(t)}+\frac{P'(t)}{tP''(t)}\right\}
\end{align*}
then (\ref{deglocpp1}) for all $t>0$ is the same as 
\begin{align}\label{lens}
|2z \pm i \sqrt{c_{P}-2}|\leq \sqrt{c_{P}+2},
\end{align}
Thus we recover the result obtained in \cite{IvaniEsk}
\begin{thmnonum2}[Eskenazis--Ivanisvili, \cite{IvaniEsk}]
Let $z \in \mathbb{C}$ and a nonnegative $P \in C([0,\infty))\cap C^{3}((0,\infty))$ be such that $P', P''>0$ on $(0, \infty)$, and $P$ satisfies the growth condition (\ref{growthC}). Then 
\begin{align}\label{esken}
\int_{\mathbb{R}^{k}} P(|T_{z}f|) d\gamma \leq \int_{\mathbb{R}^{k}} P(|f|) d\gamma
\end{align}
holds  for all polynomials $f :\mathbb{R}^{k} \mapsto \mathbb{C}$ if and only if  $z$ belongs to the ``lens'' domain (\ref{lens}). 
\end{thmnonum2}
The lens domain (\ref{lens}) represents intersection of two disks symmetric with respect to the real axes, centered on the imaginary axes passing at the points $z=\pm 1$, therefore, (\ref{lens}) implies $|z|\leq 1$, and  we just explained how ``if'' part follows from Theorem~\ref{mainth1}.  The ``only if'' part  follows from Section~\ref{localpp} (see also \cite{IvaniEsk}). 

\textbf{Example of $P(t)=\int_{0}^{t}s^{p}\log(1+s)ds$ with $p>0$}. One can check that
\begin{equation*}
	\frac{tP''(t)}{P'(t)}=p+\frac{t}{(t+1)\log(t+1)}.
\end{equation*}
Differentiating we get
\begin{equation*}
		\left(\frac{tP''(t)}{P'(t)}\right)'=\frac{(t+1)\log(t+1)-t\log(t+1)-t}{((t+1)\log(t+1))^2}=\frac{\log(t+1)-t}{((t+1)\log(t+1))^2}.
\end{equation*} Thus, $\varphi(t) = tP''(t)/P'(t)$ is a decreasing function with the minimum value $\varphi(\infty)=p$ and the maximum value $\varphi(0)=p+1$.  On the other hand $g(t)=t+1/t$ is a strictly convex. Thus,
\begin{equation*}
c_P=\sup_{t\in(0,\infty)}g\left(\frac{tP''(t)}{P'(t)}\right)=\max\{g(p),g(p+1)\}=\max\left\{p+\frac{1}{p},p+1+\frac{1}{p+1}\right\}.
\end{equation*}

\subsection{Case of imaginary $z$}

Assume $z=ir$ with $r >0$. Then (\ref{localpp}) takes the form 
\begin{align}\label{locex1}
    	\left|\frac{tP''(t)}{P'(t)}+r^{2}\frac{tQ''(t)}{Q'(t)}-r^{2}-1\right|\leq \frac{tP''(t)}{P'(t)}-|z|^{2}\frac{tQ''(t)}{Q'(t)}+1-|z|^{2}
\end{align}
 holds for all $t>0$. After openning the absolute value with two different signs the  condition (\ref{locex1}) simplifies to 
 \begin{align}\label{mart1}
     r^{2}\leq  \frac{tP''(t)}{P'(t)} \quad \text{and} \quad \frac{tQ''(t)}{Q'(t)}r^{2}\leq 1 \quad \text{holds for all} \quad t>0.  
 \end{align}
 It follows from (\ref{mart1}) that $P''>0$. Since
 \begin{align*}
 F''(t) =  \frac{Q''(P^{-1}(t))}{(P'(P^{-1}(t)))^{2}} - \frac{Q'(P^{-1}(t))P''(P^{-1}(t))}{(P'(P^{-1}(t)))^{3}}   
 \end{align*}
 must be positive it follows that $Q''>0$. Thus (\ref{mart1}) implies 
 \begin{align*}
 r \leq \min \left\{\sqrt{\inf_{t>0}\frac{tP''(t)}{P'(t)}}, \sqrt{\inf_{t>0}\frac{Q'(t)}{tQ''(t)}} \right\}.
 \end{align*}
 So we obtain 
 \begin{corollary}\label{mainc}
     Let nonnegative $P,Q \in C^{2}((0, \infty))\cap C([0, \infty))$ be such that $P', P'', Q', Q''>0$, and both $P$ and $F = Q\circ P^{-1}$ satisfy the growth condition (\ref{growthC}). If $F$ satisfies (\ref{convexityp}) then we have 
     \begin{align}\label{globpav}
Q^{-1}\left(\int_{\mathbb{R}^{k}} Q(T_{ir^{*}}f) d\gamma\right) \leq P^{-1}\left( \int_{\mathbb{R}^{k}} P(f)d\gamma\right)
     \end{align}
     holds for all polynomials $f : \mathbb{R}^{k} \mapsto \mathbb{C}$, and all $k\geq 1$, where 
$$
r^{*} = \min\left\{1, \sqrt{\inf_{t>0}\frac{tP''(t)}{P'(t)}}, \sqrt{\inf_{t>0}\frac{Q'(t)}{tQ''(t)}} \right\}
$$
is the largest number in $[0, 1]$ for which (\ref{globpav}) hods true. 
 \end{corollary}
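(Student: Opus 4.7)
The plan is to derive Corollary~\ref{mainc} as a direct specialization of Theorem~\ref{mainth1} with $z=ir^{*}$, together with the necessity of (\ref{localpp}) recorded in Section~\ref{localpp}. The growth and convexity hypotheses on $P$, $Q$, and $F=Q\circ P^{-1}$ that Theorem~\ref{mainth1} requires are exactly the hypotheses we are given, so the only nontrivial task is to show that the local condition (\ref{localpp}) holds at $z=ir$ precisely when $r\leq r^{*}$.

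First, I would reproduce cleanly the algebra already sketched before the statement: for $z=ir$ with $r\geq 0$ the quantity $z^{2}=-r^{2}$ is real, so squaring both sides of (\ref{localpp}) and expanding, one sees that the inequality in absolute value splits as a conjunction of its two signed versions. This yields the pair
\begin{align*}
r^{2}\leq \frac{tP''(t)}{P'(t)},\qquad r^{2}\,\frac{tQ''(t)}{Q'(t)}\leq 1,\qquad t>0.
\end{align*}
In particular $P''>0$ follows from the first inequality, and $Q''>0$ then follows from $F''>0$ combined with the explicit formula for $F''$ in terms of $Q', Q'', P', P''$ already displayed in the excerpt. Consequently $\inf_{t>0} tP''/P'$ and $\inf_{t>0} Q'/(tQ'')$ are well-defined nonnegative quantities.

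Second, I would observe that the largest $r\in[0,1]$ for which both of these inequalities hold for every $t>0$ is exactly
\begin{align*}
r^{*}=\min\left\{1,\ \sqrt{\inf_{t>0}\frac{tP''(t)}{P'(t)}},\ \sqrt{\inf_{t>0}\frac{Q'(t)}{tQ''(t)}}\right\}.
\end{align*}
Therefore $z=ir^{*}$ satisfies (\ref{localpp}), and Theorem~\ref{mainth1} immediately delivers (\ref{globpav}). For the optimality claim — that no $r>r^{*}$ works — I would invoke the necessity direction discussed in Section~\ref{localpp}: if (\ref{globpav}) held for some $r>r^{*}$, then (\ref{localpp}) would also hold at $z=ir$, contradicting the definition of $r^{*}$ via the equivalence with (\ref{mart1}).

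There is no serious obstacle here; the only slightly delicate point is the bookkeeping in the splitting of the absolute value, where one has to verify that \emph{both} signed inequalities are individually implied (not just their disjunction), so that the constraint on $r$ genuinely decouples into one condition involving only $P$ and another involving only $Q$. Once that decoupling is clear, the remaining content is just taking the best constant in each of the two independent constraints and applying Theorem~\ref{mainth1} as a black box.
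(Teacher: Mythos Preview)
Your proposal is correct and follows the paper's own argument essentially verbatim: specialize (\ref{localpp}) to $z=ir$, open the absolute value into the two signed inequalities to obtain (\ref{mart1}), read off $r^{*}$ as the largest admissible $r$, apply Theorem~\ref{mainth1} for sufficiency, and cite Section~5 for necessity. The only slip is the word ``squaring'': since both sides of (\ref{localpp}) are real when $z=ir$, the clean step is $|X|\le Y\iff -Y\le X\le Y$, which directly yields the \emph{conjunction} $r^{2}\le tP''/P'$ and $r^{2}\,tQ''/Q'\le 1$; squaring instead gives only the product inequality $(tP''/P'-r^{2})(r^{2}\,tQ''/Q'-1)\le 0$, which is a disjunction and would require an extra argument, so use the signed-inequality split you yourself describe a few lines later.
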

 
In the next section we present one simple way to generate functions $P$ and $Q$ which satisfy the assumptions in Theorem~\ref{mainth1} and Corollary~\ref{mainc}.

 \subsection{Systematic way to generate new examples}
 Pick any not identically zero, nonnegative,  concave $h \in C^{1}([0, \infty))$. Then it follows that $h(\theta)>0$ on $(0, \infty)$, and hence 
$ -\infty < \int_{1}^{s} \frac{d\theta}{h(\theta)}<\infty$ for all  $s>0$. 
Set 
\begin{align}\label{vax1}
    F(t) = \int_{0}^{t} \exp\left(\int_{1}^{s} \frac{d\theta}{h(\theta)} \right)ds. 
\end{align}
Notice that $F$ satisfies (\ref{convexityp}). Indeed, we have 
\begin{align*}
    F''(t) &= \frac{1}{h(t)}\exp\left(\int_{1}^{t} \frac{d\theta}{h(\theta)} \right)>0,\\
    \frac{F'(t)}{F''(t)} &=h(t) \quad \text{is concave on} \quad (0, \infty). 
\end{align*}
Next, take any $\varphi \in C^{1}(\mathbb{R})$ with $\inf_{t \in \mathbb{R}} \varphi'(t)>0$, and set 
\begin{align}
    P(t) &:=\int_{-\infty}^{\log(t)} e^{\varphi(s)+s}ds, \label{vax2}\\ 
    Q(t) &:= F(P(t)). \nonumber 
\end{align}
Notice that 
\begin{align*}
&P'(t), P''(t)>0, \quad \frac{tP''(t)}{P'(t)} = \varphi'(\log(t))>0, \quad  Q'(t) = F'(P(t))P'(t)>0, \\
&Q''(t) = F''(P(t))[P'(t)]^{2}+F'(P(t))P''(t)>0,\\
&\frac{tQ''(t)}{Q'(t)} =\frac{tP'(t)}{h(P(t))}+\frac{tP''(t)}{P'(t)} = \frac{te^{\varphi(\log(t))}}{h(P(t))}+\varphi'(\log(t))>0
\end{align*}
on $(0,\infty)$. Thus we have 
\begin{corollary}
Let $F$ and $P$ be defined as in (\ref{vax1}) and (\ref{vax2}), where $h\in C^{1}([0, \infty))$ is any nonneagtive concave $h \not\equiv 0$,  and $\varphi \in C^{1}(\mathbb{R})$ is any function with $\inf_{t \in \mathbb{R}} \varphi'(t)>0$. If both $F$ and $P$ satisfy the growth condition (\ref{growthC}) then 
$$
F\left(\int_{\mathbb{R}^{k}} P(|T_{z}f| )d\gamma\right)\leq \int_{\mathbb{R}^{k}} F(P(|f|))d\gamma
$$ 
holds for all polynomials $f :\mathbb{R}^{k} \mapsto \mathbb{C}$, and all $k\geq 1$ if and only if the local condition 
\begin{align}\label{marlocal}
\left|(1-z^{2})\left(\varphi'(s)-1\right)- z^{2}\frac{e^{\varphi(s)+s}}{h(P(e^{s}))}\right| \leq (1-|z|^{2})\left(\varphi'(s)+1\right)- |z|^{2}\frac{e^{\varphi(s)+s}}{h(P(e^{s}))}
\end{align}
holds for all $s \in \mathbb{R}$.
\end{corollary}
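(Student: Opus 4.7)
The plan is to obtain the statement as a direct application of Theorem~\ref{mainth1} with $Q := F\circ P$, where $P$ and $F$ are those of (\ref{vax2}) and (\ref{vax1}). Under this choice one has $Q\circ P^{-1}=F$ identically, so the structural convexity hypothesis (\ref{convexityp}) of Theorem~\ref{mainth1} reduces to $F''>0$ together with concavity of $F'/F''$. Both are immediate from the construction of $F$: a direct differentiation of (\ref{vax1}) gives $F''(t)=h(t)^{-1}\exp(\int_{1}^{t}d\theta/h(\theta))>0$ on $(0,\infty)$ and $F'(t)/F''(t)=h(t)$, concave by hypothesis. The remaining regularity and strict positivity conditions $P',P'',Q',Q''>0$ and $P,Q\in C^{2}((0,\infty))\cap C([0,\infty))$ are routine and are already recorded in the calculations preceding the corollary; the growth condition on $Q$ follows by composition from those assumed on $P$ and $F$, since both are increasing.

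The next step is to translate the local condition (\ref{localpp}) of Theorem~\ref{mainth1} into (\ref{marlocal}). The explicit formulas already noted give
\[
\frac{tP''(t)}{P'(t)}=\varphi'(\log t), \qquad \frac{tQ''(t)}{Q'(t)}=\varphi'(\log t)+\frac{t\,e^{\varphi(\log t)}}{h(P(t))}.
\]
Substituting $s=\log t$, the linear combinations $\frac{tP''}{P'}-z^{2}\frac{tQ''}{Q'}+z^{2}-1$ and $\frac{tP''}{P'}-|z|^{2}\frac{tQ''}{Q'}+1-|z|^{2}$ appearing inside (\ref{localpp}) collapse respectively to $(1-z^{2})(\varphi'(s)-1)-z^{2}e^{\varphi(s)+s}/h(P(e^{s}))$ and $(1-|z|^{2})(\varphi'(s)+1)-|z|^{2}e^{\varphi(s)+s}/h(P(e^{s}))$, which is exactly (\ref{marlocal}).

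Having matched both the structural and the local hypotheses of Theorem~\ref{mainth1}, the ``if'' direction of the corollary follows immediately. The ``only if'' direction is handled by the general necessity of (\ref{localpp}) for the global inequality to hold for all polynomials and every $k$; this necessity is established in Section~\ref{localpp}, and under the translation performed above it is precisely the necessity of (\ref{marlocal}). I expect no serious obstacle, as the construction (\ref{vax1})--(\ref{vax2}) has been tuned precisely so that $F'/F''=h$ and $tP''(t)/P'(t)=\varphi'(\log t)$; the only work is the careful substitution just described. The single mild delicacy is verifying that $Q=F\circ P$ inherits the growth control demanded by Theorem~\ref{mainth1}, which one obtains from the chain-rule identity $Q'=F'(P)P'$ and the monotonicity of the factors.
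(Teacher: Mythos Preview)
Your approach is exactly the paper's: the corollary is obtained by applying Theorem~\ref{mainth1} with $Q:=F\circ P$, the required positivity, regularity, and convexity checks having been carried out in the paragraph immediately preceding the statement, and the substitution $s=\log t$ turning (\ref{localpp}) into (\ref{marlocal}) just as you describe. One small remark: the displayed inequality in the corollary has the two sides transposed relative to what Theorem~\ref{mainth1} actually yields; both your argument and the paper's discussion produce $\int_{\mathbb{R}^{k}} F(P(|T_{z}f|))\,d\gamma \le F\bigl(\int_{\mathbb{R}^{k}} P(|f|)\,d\gamma\bigr)$, and it is this form whose necessity analysis (Section~5) returns precisely (\ref{marlocal}).
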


\textbf{Example 1.} If we let $\varphi(t)=(p-1)t$, and $h(t) = \frac{p}{q-p}t$, where $1<p<q<\infty$,  then clearly $F$ and $P$ satisfy the growth condition (\ref{growthC}), $h(P(e^{s}))=\frac{e^{sp}}{q-p}$, and we obtain that (\ref{marlocal}) reduces to (\ref{weis1}).

\textbf{Example 2}. Let again $\varphi(t)=(p-1)t, p>1$, and $h(t) = \alpha t+\log(1+t)$ with $\alpha>0$. Then clearly $F$ and $P$ satisfy the growth condition (\ref{growthC}). 
We have 
\begin{align*}
    \frac{tQ''(t)}{Q'(t)} =\frac{p}{\alpha + \frac{p\log(1+t^{p}/p)}{t^{p}}}+p-1.
\end{align*}
Thus $\sup_{t \in (0,\infty)} \frac{tQ''(t)}{Q'(t)}=\frac{p}{\alpha}+p-1$, and hence Corollary~\ref{mainc} holds with 
\begin{align*}
    r^{*} = \min\left\{\sqrt{p-1},  \frac{1}{\sqrt{\frac{p}{\alpha}+p-1}} \right\}.
\end{align*}

More sophisticated examples can be created by considering $\varphi(t) = \beta t - \log(1+t^{2}), \beta>1$, and $h(t) = \alpha t + t^{\lambda}$ with $\alpha>0$ and $\lambda \in (0,1)$.

\section{The construction of $C(s)$}\label{construction}

To explain the appearence of the map $C(s)$ for simplicity let $k=1$, and  let us rewrite the inequality (\ref{globalpp}) as follows 
\begin{align}\label{globalgaus}
\mathbb{E} F(P(|T_{z}f(\xi)|)) \leq F(\mathbb{E} P (|f(\xi)|)),
\end{align}
where $\xi \sim N(0,1)$ is the standard Gaussian random variable, and $F = Q\circ P^{-1}$. By the central limit theorem we may replace $\xi$ by 
\begin{align*}
\xi_{m} :=\frac{\varepsilon_{1}+\ldots+\varepsilon_{m}}{\sqrt{m}},
\end{align*}
where $\varepsilon_{1}, \ldots, \varepsilon_{m}$ are i.i.d. symmetric Bernoulli  $\pm 1$ random variables, and ask whether a ``stronger inequality'', i.e., (\ref{globalgaus}) holds with $\xi_{m}$ instead of $\xi$. Now notice that we can think of $f(\xi_{m})$ as a function on the Hamming cube $\{-1,1\}^{m}$, namely 
\begin{align*}
g(\varepsilon_{1}, \ldots, \varepsilon_{m}) = f\left(\frac{\varepsilon_{1}+\ldots+\varepsilon_{m}}{\sqrt{m}}\right).
\end{align*}
Any function $g : \{-1,1\}^{m} \mapsto \mathbb{C}$ admits Fourier--Wash series representaiton 
\begin{align}\label{walsh}
    g(\varepsilon) = \sum_{S \subset \{1, \ldots, m\}} \widehat{g}(S) W_{S}(\varepsilon), \quad \text{where} \quad W_{S}(\varepsilon) = \prod_{j \in S} \varepsilon_{j},
\end{align}
where $\varepsilon=(\varepsilon_{1}, \ldots, \varepsilon_{m}),$ and  $\widehat{g}(S) = \mathbb{E} g(\varepsilon) W_{S}(\varepsilon)$. 

An important part is to understand how $T_{z}f(\xi_{m})$ is related to the right hand side of (\ref{walsh}). Let $f(t) = \sum_{\ell=0}^{N} c_{\ell} H_{\ell}(t)$. Beckner's lemma \cite{Beckner} says that for any integer $\ell \geq 0$, we have
\begin{align*}
   H_{\ell}\left(\frac{\varepsilon_{1}+\ldots+\varepsilon_{m}}{\sqrt{m}}\right)  = \frac{\ell!}{m^{\ell/2}} \sum_{|S|=\ell, \, S \subset \{1, \ldots, n\}} W_{S}(\varepsilon) + L_{m}
\end{align*}
where $L_{m}$ are {\em lower order terms} which converge to zero in distribution. Thus 
\begin{align*}
g(\varepsilon_{1}, \ldots, \varepsilon_{n}) = f(\xi_{m}) = \sum_{S \subset \{1, \ldots, m\}, \, |S|\leq N}\frac{|S|!}{m^{|S|/2}}c_{|S|}W_{S}(\varepsilon) +\tilde{L}_{m},
\end{align*}
 and also
\begin{align*}
T_{z} f(\xi_{m}) = \sum_{\ell=0}^{N}z^{\ell}c_{\ell}H_{\ell}(\xi_{m}) = \sum_{S \subset \{1, \ldots, m\}, \, |S|\leq N}z^{|S|}\frac{|S|!}{m^{|S|/2}}c_{|S|}W_{S}(\varepsilon) +K_{m},
\end{align*}
where $K_{m}, \tilde{L}_{m}$ are some other lower order terms. If  we ignore  the lower order terms $\tilde{L}_{m}$ and $K_{m}$ and set them to be zero (at least we hope that this is the case in the limit $m\to \infty$ after taking the expectation), and we denote $a_{S} := \frac{|S|!}{m^{|S|/2}}c_{|S|}$, then we may wonder under what condiitons on $F, P$ and $z$  the following {\em discrete  version} of (\ref{globalgaus}) holds true 
\begin{align}\label{mainhyp}
\mathbb{E} F\big(P\big(|\sum_{S \subset\{1, \ldots, m\}} z^{|S|}a_{S} W_{S}(\varepsilon) |\big)\big) \leq F\big(\mathbb{E} P \big(|\sum_{S \subset\{1, \ldots, m\}} a_{S} W_{S}(\varepsilon)|\big)\big). 
\end{align}
If we denote $\tilde{g}(\varepsilon) =\sum_{S \subset\{1, \ldots, m\}} a_{S} W_{S}(\varepsilon)$, and extend  the  domain of the {\em polynomial} $\tilde{g}$ in a natural way to $\mathbb{C}^{m}$ we can rewrite (\ref{mainhyp}) as follows 
\begin{align}\label{hypnorm}
\mathbb{E} F\big(P\big(|\tilde{g}(z\varepsilon_{1}, \ldots, z\varepsilon_{m})|\big)\big) \leq F\big(\mathbb{E} P \big(|\tilde{g}(\varepsilon_{1}, \ldots, \varepsilon_{m})|\big)\big). 
\end{align}
The inequality (\ref{hypnorm}) for $m=1$ is what is called {\em two point-inequality}
\begin{align}\label{two-pp}
    \frac{F(P(|a+bz|))+F(P(|a-bz|))}{2} \leq F\left(\frac{P(|a+b|)+P(|a-b|)}{2}\right)
\end{align}
holds for all $a,b \in \mathbb{C}$. The verification of (\ref{two-pp}) for a given $F$, $P$, and $z\in \mathbb{C}$, can be a difficult techincal problem, see for example,  Weissler's conjecture \cite{IvNa}  where the two-point inequality (\ref{two-pp}) was solved in the case $F(t)=t$ and $P(t)=t^{p}$. 

An important observation is that if we let $a=1$, and $b=\omega \delta$, where $\omega \in \mathbb{C}$ is arbitrary and $\delta \in \mathbb{R}$, then after writing  Taylor's formula for both sides of the two-point inequality (\ref{two-pp}) at point $\delta=0$ we will see that the first order terms will cancel each other. The comparison of the second order terms in (\ref{two-pp}) and then optimization over all $w \in \mathbb{C}$  gives the local condition (\ref{localpp}) in Theorem~\ref{mainth1}. So there is a hope that in the limit $m\to \infty$ only the local condition  will be needed, which happens to be true in addition with (\ref{convexityp}). 

The condition (\ref{convexityp}) allows us to induct (\ref{two-pp}) and obtain (\ref{mainhyp}). Indeed, as explained in Section~\ref{Introduction}, the condition (\ref{convexityp}) implies (in fact is equiavlent to under some mild assumptions on $F$) that
\begin{align}\label{polya1}
	\mathfrak{M}_{F}(h) = F^{-1}\left(\int_{\Omega} F (h) d\mu\right) \quad \text{is convex}
\end{align} 
as a functional acting on nonnegative functions $h : \Omega \mapsto [0, \infty)$, where $d\mu$ is a probability measure on the probability space $(\Omega, d\mu)$ equipped with Borel $\sigma$-algebra. Therefore, we have 
\begin{align}
    &F^{-1}(\mathbb{E} F\big(P\big(|\tilde{g}(z\varepsilon_{1}, \ldots, z\varepsilon_{m})|\big)\big)) =F^{-1}(\mathbb{E}_{m-1}\mathbb{E}^{1}F\big(P\big(|\tilde{g}(z\varepsilon_{1}, \ldots, z\varepsilon_{m})|\big)\big)) \nonumber\\
    &\stackrel{(\ref{two-pp})}{\leq}F^{-1}(\mathbb{E}_{m-1}F\big(\mathbb{E}^{1}P\big(|\tilde{g}(\varepsilon_{1}, \ldots, z\varepsilon_{m})|\big)\big)) \stackrel{(\ref{polya1})}{\leq}\mathbb{E}^{1} F^{-1}(\mathbb{E}_{m-1}F\big(P\big(|\tilde{g}(\varepsilon_{1}, \ldots, z\varepsilon_{m})|\big)\big))\nonumber \\
    &\stackrel{\mathrm{induction}}{\leq}\mathbb{E} P (|\tilde{g}(\varepsilon)|). \label{izrdeba}
\end{align}    
Here $\mathbb{E}_{n-1}$ and $\mathbb{E}^{1}$ take the average in the last $n-1$ and the first variables correspondingly. 

The inductive argument (\ref{izrdeba}) shows that if the two-point inequality (\ref{two-pp}) holds, and also the functional $\mathfrak{M}_{F}(\cdot)$ is convex then the discrete map 
\begin{align}\label{dismap}
 &\varphi(k) = \mathbb{E}^{k}F^{-1}(\mathbb{E}_{n-k}F(P(|\tilde{g}(\varepsilon_{1}, \ldots, \varepsilon_{k}, z\varepsilon_{k+1}, \ldots, z\varepsilon_{m})|))) \\
 &=\mathbb{E}^{k}F^{-1}(\mathbb{E}_{n-k}Q(|\tilde{g}(\varepsilon_{1}, \ldots, \varepsilon_{k}, z\varepsilon_{k+1}, \ldots, z\varepsilon_{m})|)) 
\end{align}
is nondecreasing for $0\leq k \leq m $. If we choose $k=k(m)$ so that $\lim_{m\to \infty} k/m=s \in [0,1]$ then we expect the discrete map (\ref{dismap}) to converge to 
\begin{align*}
r(s)  =\int_{\R}F^{-1}\left(\int_{\R}Q\Big(\Big|\int_{\R}\int_{\R}R(\sqrt{s}(u+iv)+z\sqrt{1-s}(x+iy))\,d\gamma(v)\,d\gamma(y)\Big|\Big)\,d\gamma(x)\right)\,d\gamma(u),
\end{align*}
where $R(t) = \sum_{\ell=0}^{N}a_{\ell}t^{\ell}$. The passage from the discrete map $\varphi(k)$ to continuous map $r(s)$ is a bit nontrivial, and it was proved in \cite{IvanVol} for functions $Q(t)=t^{q}$ and $F(t)=t^{q/p}$. Monotonicity of $\varphi(k)$ implies that $r(s)$ is nondecreasing. Since $r(0)=F^{-1}\left(\int_{\mathbb{R}} Q(|T_{z}f|)d\gamma\right)$ and $r(1)=\int_{\mathbb{R}}F^{-1}(Q(|f|)) d\gamma = \int_{\mathbb{R}} P(|f|) d\gamma$ (see Section \ref{Proof}) we recover (\ref{globalpp}). 

One can ingore all the heuristics used in this reasoning and try to directly show $r'(s)\geq 0$. The map $C(s)$ introduced in Section~\ref{Proof} is similar to $r(s)$  but different,
\begin{align*}
C(s)=\int_{\R}F\left(\int_{\R}P\Big(\Big|\int_{\R}\int_{\R} R(\sqrt{s}(u+iv)+z\sqrt{1-s}(x+iy))\,d\gamma(v)\,d\gamma(y)\Big|\Big)\,d\gamma(u)\right)\,d\gamma(x),
\end{align*}
for example, our flow $C(s)$ interpolates (\ref{globalpp}) in a 
 slightly different way, namely, we have  $C(0)=\int_{\mathbb{R}}F(P(|T_{z}f|))d\gamma$ and $C(1)=F\left(\int_{\mathbb{R}} P(|f|) d\gamma\right)$. It remains to extend the argument from $\mathbb{R}$ to $\mathbb{R}^{k}$ and hope that $C'(s)\geq 0$. Surprisingly, this turns out to be the case  with the smoothened version of $C(s)$, under the assumptions (\ref{convexityp}) and (\ref{localpp}).

\section{The proof of Theorem~\ref{mainth1}}\label{Proof}

Let  $B \in C^{2}((0, \infty))$. We say $B$ satisfies  growth condition if there exists $N, M>0$ such that 
\begin{align}\label{growthC}
|B''(t)|<\exp( \log^{N}(t+N)) 
\end{align}
holds for all $t\geq M$. Clearly by integrating the inequality (\ref{growthC}) we obtain similar bounds  (perhaps with different constants $N$ and $M$) for $|B(t)|$ and $|B'(t)|$ too.

\begin{lemma}\label{pp1}
Let nonnegative $F, M  \in C([0,\infty)) \cap C^{2}((0, \infty))$ satisfy growth condition (\ref{growthC}), and be such that $F', M'>0$ on $(0, \infty)$.  Assume 
\begin{align*}
    F''(x)>0 \quad \text{and} \quad x \mapsto F'(x)/F''(x) \quad \text{is concave on} \quad (0,\infty).
\end{align*}
If the local condition 
\begin{align}\label{localp1}
-\frac{F''(M(x))}{F'(M(x))}(M'(x)\Re (zw))^{2} + M''(x)((\Re w)^{2} - (\Re zw)^{2}) + \frac{1-|z|^{2}}{2} \frac{M'(x)}{x}|w|^{2} \geq 0
\end{align}
holds for all $x>0$,  all $w \in \mathbb{C}$, and some $z \in \mathbb{C}, |z|\leq 1$,  then
\begin{align}\label{globalp1}
\int_{\mathbb{R}^{k}} F(M(|T_{z} f|^{2})) d\gamma \leq F \left(\int_{\mathbb{R}^{k}} M(|f|^{2}) d\gamma\right)
\end{align}
holds for all $k\geq 1$ and all polynomials $f:\mathbb{R}^{k} \mapsto \mathbb{C}$. 
\end{lemma}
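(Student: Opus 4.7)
The plan is to interpolate between the two sides of (\ref{globalp1}) by a one-parameter family and prove the family is monotone. Expand $f(x) = \sum_{|\alpha|\le \deg f} c_\alpha H_\alpha(x)$ and let $R(w) = \sum_\alpha c_\alpha w^\alpha$. For $s \in [0,1]$ define
$$\Phi(u,x,s) = \int_{\mathbb{R}^k}\!\!\int_{\mathbb{R}^k} R\bigl(\sqrt{s}(u+iv) + z\sqrt{1-s}(x+iy)\bigr)\,d\gamma(v)\,d\gamma(y),$$
so that $\Phi(u,x,0) = T_z f(x)$ (independent of $u$) and $\Phi(u,x,1) = f(u)$ (independent of $x$), using $H_\alpha(x) = \int (x+iy)^\alpha d\gamma(y)$. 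Set
$$C(s) = \int_{\mathbb{R}^k} F\!\left(\int_{\mathbb{R}^k} M(|\Phi(u,x,s)|^2)\,d\gamma(u)\right) d\gamma(x).$$
Then $C(0) = \int F(M(|T_z f|^2))\,d\gamma$ and $C(1) = F(\int M(|f|^2)\,d\gamma)$, so it suffices to show $C'(s)\ge 0$ on $(0,1)$.

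\textbf{Differentiating and integrating by parts.} Writing the two inner Gaussian integrations as complex heat (Mahler) semigroups in the $u$- and $x$-arguments of $R$, one checks that $\Phi$ satisfies a ``mixed'' heat identity $\partial_s \Phi = \tfrac{1}{2}\partial_u^2 \Phi - \tfrac{z^2}{2}\partial_x^2 \Phi$ for each coordinate (summed over $k$). Differentiating $C(s)$ then produces a term $M'(|\Phi|^2)\,\partial_s|\Phi|^2$ inside the outer $F'$. I would next integrate by parts in $u$ to convert $\partial_u^2$ into $|\partial_u \Phi|^2$ (modulo the Ornstein--Uhlenbeck drift $u \cdot \partial_u$ produced by $d\gamma(u)$, which yields exactly the coefficient $\tfrac{1-|z|^2}{2}\tfrac{M'}{x}$-type term after combining with the $x$-block), and integrate by parts in $x$ to convert the $\partial_x^2$ term. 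The $x$-integration by parts, when pulled through the outer $F$, spawns an additional $F''$ contribution.

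\textbf{Reduction to the pointwise inequality.} After this bookkeeping, $C'(s)$ takes the form
$$C'(s) = \int_{\mathbb{R}^k}\!\!\int_{\mathbb{R}^k} F'\!\left(\int M(|\Phi|^2)\,d\gamma(u)\right)\cdot \mathcal{Q}(u,x)\,d\gamma(u)\,d\gamma(x) \;+\; (\text{outer $F''$ term}),$$
where $\mathcal{Q}(u,x)$ is, up to a positive factor, the left hand side of (\ref{localp1}) evaluated at $x \leftrightarrow |\Phi|^2$ and with $w$ a suitable complex linear combination of $\partial_u\Phi$ and $\partial_x\Phi$. The local condition gives $\mathcal{Q} \geq 0$ pointwise. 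The outer residual involving $F''$ is handled by the hypothesis that $F'/F''$ is concave: this is exactly equivalent to the functional $\mathfrak{M}_F(h) = F^{-1}(\int F(h)\,d\gamma)$ being convex, as recalled after Theorem~\ref{mainth1}, which via a Jensen/Cauchy--Schwarz step in the outer $x$-integration absorbs the $F''$ contribution with the right sign. Combining these gives $C'(s)\ge 0$.

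\textbf{Regularization and expected obstacle.} The integrand may fail to be classically differentiable where $\Phi$ vanishes or where $M'$ or $F'$ degenerates at the origin, so I would first regularize by replacing $M(t), F(t)$ by $M(t+\varepsilon), F(t+\varepsilon)$ and approximate the derivatives uniformly on polynomials, run the above argument to deduce $C_\varepsilon(0)\le C_\varepsilon(1)$, then let $\varepsilon\to 0^+$ using the growth bound (\ref{growthC}) and dominated convergence. The main technical obstacle I anticipate is organizing the four heat flows (two complex ones in $v,y$, two OU drifts in $u,x$) so that, after the successive integrations by parts, the quadratic form in $\partial_u\Phi, \partial_x\Phi$ collapses cleanly into the specific combination appearing in (\ref{localp1}); in particular, identifying the Ornstein--Uhlenbeck drift term as the precise source of the $\tfrac{1-|z|^2}{2}\tfrac{M'(x)}{x}|w|^2$ coefficient is the delicate algebraic step.
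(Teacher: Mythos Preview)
Your interpolating functional $C(s)$ and the overall strategy---show $C'(s)\ge 0$ by reducing to the local condition, using Jensen for the $F''$ contribution, and regularize at the origin---is exactly the paper's approach. However, several of your execution details are wrong and would make the computation needlessly hard or fail as written.

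The paper's key technical simplification, which you are missing, is to write the \emph{outer} $u$- and $x$-integrations with variance-$s$ and variance-$(1-s)$ Gaussians $d\gamma^{(s)}(u)$, $d\gamma^{(1-s)}(x)$ as well, so that every $s$-dependence sits inside a heat flow and $\frac{d}{ds}I_s B=\tfrac12 I_s(\Delta B)$ applies directly. No integration by parts and no Ornstein--Uhlenbeck drift terms ever appear. In those unscaled variables one has $\partial_{x_j}g=z\,\partial_{u_j}g$ and $\partial_s g=\tfrac{z^2-1}{2}\Delta_u g$; your stated identity $\partial_s\Phi=\tfrac12\partial_u^2\Phi-\tfrac{z^2}{2}\partial_x^2\Phi$ is incorrect in the scaled coordinates. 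The term $\tfrac{1-|z|^2}{2}\tfrac{M'(x)}{x}|w|^2$ does \emph{not} come from an OU drift: it arises because in $(\partial_{u_j}^2-\partial_{x_j}^2)|g|^2$ the second derivatives $g_{jj}$ cancel and what remains is $2(1-|z|^2)|g_j|^2$. The variable $w$ in the local condition is $w=g_j\bar g$, not a linear combination of $\partial_u\Phi$ and $\partial_x\Phi$ (these are proportional). The $F''$ term is absorbed by two Jensen steps---concavity of $F'/F''$, then convexity of $(t,y)\mapsto y^2/t$---applied to the inner $u$-integration, not the outer $x$-integration, and Cauchy--Schwarz is not used. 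Finally, only $M$ needs the $\varepsilon$-shift; $F$ does not, and shifting $F$ would not obviously preserve the local condition.
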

\begin{proof}
Pick any $\varepsilon >0$ and consider $\tilde{M}(x) = M(x+\varepsilon)$. The main reason we introduce $\tilde{M}$ is to avoid issues with the values $M'(0)$, $M''(0)$ being not defined, and the fact that $M(0)$ can be zero which can make $F'(M(0))$ and $F''(M(0))$ undefined. However, for the first time of reading the proof the reader should ignore these techincal difficulties and think $\varepsilon =0$, i.e., $\tilde{M} = M$. 

Notice that $\tilde{M}>M(\varepsilon)>0$, and $\tilde{M}$ satisfies growth condition (\ref{growthC}). Also notice that $\tilde{M}$ satisfies the local condition (\ref{localp1}). Indeed, it suffices to apply the inequality (\ref{localp1}) at point $x+\varepsilon$ instead of $x$, and use the inequality 
$$
\frac{1-|z|^{2}}{2} \frac{M'(x+\varepsilon)}{x+\varepsilon} |w|^{2} \leq \frac{1-|z|^{2}}{2} \frac{\tilde{M}'(x)}{x} |w|^{2}. 
$$

Let $d\gamma^{(s)}$ denote the  Gaussian measure on $\R^k$ of variance $s>0$ by
\begin{equation*}
	d\gamma^{(s)}(x)=\frac{1}{(2\pi s)^{\frac{k}{2}}}e^{-|x|^2/2s}\,dx. 
\end{equation*}
 Let $f(x) = \sum_{|\alpha|\leq \mathrm{deg}(f)}c_{\alpha}H_{\alpha}(x)$, and let 
\begin{align*}
		\ell(x)=\sum_{|\alpha|\leq \deg f}c_{\alpha}x_1^{\alpha_1}\ldots x_k^{\alpha_k}
	\end{align*}
Next, define 
\begin{equation*}	g(x,u,s)=\int_{\R^k}\int_{\R^k}\ell((u+iv)+z(x+iy))\,d\gamma^{(s)}(v)\,d\gamma^{(1-s)}(y),
\end{equation*}
and consider $C(s):[0,1]\to \R$ given by 
	\begin{equation*} C(s)=\int_{\R^k}F\left(\int_{\R^k}\tilde{M}(|g(x,u,s)|^2)\,d\gamma^{(s)}(u)\right)\,d\gamma^{(1-s)}(x).
	\end{equation*}
	Note that the inequality (\ref{globalp1}) can be rewritten as $C(0)\leq C(1)$. To see this, let us make a change of variables and write 
	\begin{equation*}
		g(x,u,s)=\int_{\R^k}\int_{\R^k}\ell((u+i\sqrt{s}v)+z(x+i\sqrt{1-s}y))\,d\gamma(v)\,d\gamma(y).
	\end{equation*}
	Substitute $g(x,u,s)$ into the definition of $C(s)$ and write 
	\begin{align*}	C(s)=\int_{\R^k}F\left(\int_{\R^k}\tilde{M}\Big(\Big|\int_{\R^k}\int_{\R^k}\ell(\sqrt{s}(u+iv)+z\sqrt{1-s}(x+iy))\,d\gamma(v)\,d\gamma(y)\Big|^2\Big)\,d\gamma(u)\right)\,d\gamma(x).
	\end{align*}
	Notice that
	\begin{align*}
T_zf(x)&=\int_{\R^k}\ell(z(x+iy))\,d\gamma(y),\\
		f(u)&=\int_{\R^k}\ell(u+iv)\,d\gamma(v),
	\end{align*}
	and so,
	\begin{align*}
	C(0)&=\int_{\R^k}F\left(\tilde{M}\Big(\Big|\int_{\R^k}\ell(z(x+iy))\,d\gamma(y)\Big|^2\Big)\right)\,d\gamma(x)=\int_{\R^k}F(\tilde{M}(|T_zf|^2))\,d\gamma,
	\end{align*}
	and
	\begin{align*} C(1)=F\left(\int_{\R^k}\tilde{M}\Big(\Big|\int_{\R^k}\ell(u+iv)\,d\gamma(v)\Big|^2\Big)\,d\gamma(u)\right)=F\left(\int_{\R^k}\tilde{M}(|f|^2)\,d\gamma\right).
	\end{align*}
We claim $C'(s) \geq 0$. Denote by $I_sB$ the heat flow of  $B:\R^k \mapsto \mathbb{C}$ evaluated at zero, namely 
	\begin{equation}\label{heat flow}
		I_sB=\int_{\R^k} 
		B\,d\gamma^{(s)}.
	\end{equation}
Note that by the heat equation we have
	\begin{equation}\label{heat'-identity1}
		\frac{d}{ds}I_sB=\frac{1}{2}I_s(\Delta_x B).
	\end{equation}
	For what follows, we need to indicate the variable in which the flow $I_s$ is applied. For instance, in \eqref{heat'-identity1}, we will denote it as $I_s^x$ instead of $I_s$. With this notation we have
	\begin{align*}
		C'(s)&=\frac{d}{d s}\left(I_{1-s}^xF(I_s^u\tilde{M}(|g(x,u,s)|^2))\right)\\
		&=-\frac{1}{2}I_{1-s}^x\Delta_x [F(I_s^u\tilde{M}(|g(x,u,s)|^2))]+I_{1-s}^x\frac{d}{d s}F(I_s^u\tilde{M}(|g(x,u,s)|^2)).
	\end{align*}
	Denote 
 $$
 A=I_s^u\tilde{M}(|g(x,u,s)|^2),
 $$
 and let us shortly write $g:=g(x,u,s)$. We have 
		\begin{align*}
		C'(s)=I_{1-s}^x\left[-\frac{1}{2}\Delta_x [F(A))]  +F'(A))  I_s^u\left[\frac{1}{2}\Delta_u \tilde{M}(|g|^{2})+\frac{d}{ds}\tilde{M}(|g|^{2}) \right]\right].
	\end{align*}
	Set $g_j$ to be the partial derivative of $g$ with respect to $u_j$. This derivative indicates that we first differentiate the polynomial $\ell$ in the $j$-th coordinate and then apply the flow $I_{1-s}^yI_s^v$. Our goal is to express the quantity $C'(s)$ in terms of $g_{j}$ {\em as much as it is possible}, because later we can treat the variables $g_{j}$ as being arbitrary and it will help us to better see why $C'(s)$ has a definite sign.  
 
We have
	\begin{align*}
		g(x,u,s)_{x_j}&=zg_j,\\
		(|g|^{2})_{u_j}&=(g\overline{g})_{u_j}=g_i\bar{g}+g\bar{g_j}=2\Re(g_j\bar{g}),\\
		(|g|^{2})_{x_j}&=(g\overline{g})_{x_j}=g_{x_j}\bar{g}+g\bar{g}_{x_j}=zg_j\bar{g}+g\bar{z}\bar{g}_j=2\Re(zg_j\bar{g}),\\  (|g|^{2})_{u_ju_j}&=g_{jj}\overline{g}+g\overline{g_{jj}}+2|g_j|^2,\\		(|g|^{2})_{x_jx_j}&=g_{x_jx_j}\bar{g}+g\overline{g_{x_jx_j}}+2g_{x_j}\bar{g}_{x_j}=z^2g_{jj}\overline{g}+g\overline{z}^2\overline{g_{jj}}+2|z|^2|g_j|^2.
	\end{align*}
	We also compute,
	\begin{align*}
		\frac{d}{ds}g&=\frac{d}{ds}I_{1-s}^yI_s^v\ell((u+iv)+z(x+iy)) \\
		&=-\frac{1}{2}I_{1-s}^yI_s^v\Delta_y\ell((u+iv)+z(x+iy))+\frac{1}{2}I_{1-s}^yI_s^v\Delta_v\ell((u+iv)+z(x+iy))\\
		&=\frac{1}{2}\sum_{j=1}^k\left(-(iz)^2g_{jj}+i^2g_{jj}\right)=\frac{z^2-1}{2}\sum_{j=1}^kg_{jj},
	\end{align*}
	and therefore we have
	\begin{align*}
	\frac{d}{ds}\tilde{M}(|g|^{2})&=\tilde{M}'(|g|^{2})(|g|^{2})_s=\tilde{M}'(|g|^{2})\left(\bar{g}\frac{d}{ds}g+g\frac{d}{ds}\bar{g}\right)\\
	&=\sum_{j=1}^k\tilde{M}'(|g|^{2})\left(\frac{z^2-1}{2}\bar{g}g_{jj}+\frac{\bar{z}^2-1}{2}g\bar{g}_{jj}\right).
	\end{align*}
Thus
	\begin{align*}
	&C'(s)=\\
 &\frac{1}{2}\sum_{j=1}^kI_{1-s}^x\left[-\frac{\partial^2}{\partial x_j^2} F(A)+F'(A)  I_s^u\Big[\frac{\partial^2}{\partial u_j^2} \tilde{M}(|g|^{2})+\tilde{M}'(|g|^{2})[(z^2-1)\bar{g}g_{jj}+(\bar{z}^2-1)g\bar{g}_{jj}]\Big] \right].
\end{align*}
Let us denote by   $\mathcal{L}:=\mathcal{L}(x,s)$ the function inside the square brackets above. We will show that $\mathcal{L}\geq0$.	We have
	\begin{align*}
	&\mathcal{L}=\\
 &-F''(A)(I_s^u\tilde{M}'(|g|^{2})(|g|^{2})_{x_j})^2-F'(A)I_s^u\tilde{M}''(|g|^{2})((|g|^{2})_{x_j})^2-F'(A)I_s^u\tilde{M}'(|g|^{2})(|g|^{2})_{x_jx_j}\\ 
	&+F'(A)  I_s^u\big[\tilde{M}''(|g|^{2})((|g|^{2})_{u_j})^2+\tilde{M}'(|g|^{2})[(|g|^{2})_{u_ju_j}+(z^2-1)\bar{g}g_{jj}+(\bar{z}^2-1)g\bar{g}_{jj}]\big] ,
\end{align*}
so,  
	\begin{align*}
		\mathcal{L}=&-F''(A)(I_s^u\tilde{M}'(|g|^{2})(|g|^{2})_{x_j})^2+F'(A)I_s^u\tilde{M}''(|g|^{2})\Big[((|g|^{2})_{u_j})^2-((|g|^{2})_{x_i})^2\Big]+\\ 
		&+F'(A)I_s^u\tilde{M}'(|g|^{2})\Big[(|g|^{2})_{u_ju_j}-(|g|^{2})_{x_jx_j}+(z^2-1)\bar{g}g_{jj}+(\bar{z}^2-1)g\bar{g}_{jj}\Big].
	\end{align*} 
	Note that
	\begin{align*}
		(|g|^{2})_{u_ju_j}-(|g|^{2})_{x_jx_j}=g_{jj}\overline{g}(1-z^2)+g\bar{g}_{jj}(1-\bar{z}^2)+2|g_j|^2(1-|z|^2),
	\end{align*}
	Miraculously the second derivatives $g_{jj}$ cancel each other, and we get 
	\begin{align}\label{closetocondition}
			\mathcal{L}&=-4 F''(A)(I_s^u\tilde{M}'(|g|^{2})\Re (z g_{j} \overline{g}))^2&\\
&+F'(A)I_s^u \left( 4\tilde{M}''(|g|^{2})\Big[(\Re ( g_{j} \overline{g}))^2-(\Re (z g_{j} \overline{g}))^2\Big]+ 2(1-|z|^2)\tilde{M}'(|g|^{2})|g_j|^2 \right).\nonumber
	\end{align}
 Since $F'(A)>0$ it is enough to show $\frac{\mathcal{L}}{F'(A)}\geq 0$. Recall that $A = I_{s}^{u}\tilde{M}(|g|^{2})$,  $x \mapsto \frac{F'(x)}{F''(x)}$ is concave on $(0,\infty)$, and the map $(x,y) \mapsto \frac{y^{2}}{x}$ is convex on $\mathbb{R}\times (0, \infty)$. Therefore we have 
 \begin{align*}
 &-4 \frac{F''(A)}{F'(A)}(I_s^u\tilde{M}'(|g|^{2})\Re (z g_{j} \overline{g}))^2 = -4 \frac{(I_s^u\tilde{M}'(|g|^{2})\Re (z g_{j} \overline{g}))^2}{ \frac{F'(I_{s}^{u}\tilde{M}(|g|^{2}))}{F''(I_{s}^{u}\tilde{M}(|g|^{2}))}} \\
 &\stackrel{\mathrm{Jensen}}{\geq}-4 \frac{(I_s^u\tilde{M}'(|g|^{2})\Re (z g_{j} \overline{g}))^2}{ I_{s}^{u} \left( \frac{F'(\tilde{M}(|g|^{2}))}{F''(\tilde{M}(|g|^{2}))}\right)}\stackrel{\mathrm{Jensen}}{\geq}-4 I_{s}^{u}\left( \frac{(\tilde{M}'(|g|^{2})\Re (z g_{j} \overline{g}))^2}{  \frac{F'(\tilde{M}(|g|^{2}))}{F''(\tilde{M}(|g|^{2}))}}\right).
 \end{align*}
 \begin{remark}\label{remarka1}
 Notice that the inequality
 \begin{align*}
   -4 \frac{F''(A)}{F'(A)}(I_s^u\tilde{M}'(|g|^{2})\Re (z g_{j} \overline{g}))^2  \geq -4 I_{s}^{u}\left( \frac{F''(\tilde{M}(|g|^{2}))(\tilde{M}'(|g|^{2})\Re (z g_{j} \overline{g}))^2}{  F'(\tilde{M}(|g|^{2}))}\right)  
 \end{align*}
 also follows from Jensen's inequality provided that $(t,y) \mapsto \frac{F''(t)}{F'(t)}y^{2}$ is convex on $(0, \infty)\times \mathbb{R}$. 
 \end{remark}
 Thus we have $\frac{\mathcal{L}}{F'(A)} \geq 4I_{s}^{u} \Psi$, where 
 \begin{align*}
\Psi &=  - (\tilde{M}'(|g|^{2})\Re (z g_{j} \overline{g}))^2 \frac{F''(\tilde{M}(|g|^{2}))}{F'(\tilde{M}(|g|^{2}))}+\tilde{M}''(|g|^{2})\Big[(\Re ( g_{j} \overline{g}))^2-(\Re (z g_{j} \overline{g}))^2\Big] \\
 &+ \frac{1-|z|^2}{2}\tilde{M}'(|g|^{2})|g_j|^2.
 \end{align*}
If $|g|=0$ then $\Psi  = \frac{1-|z|^2}{2}\tilde{M}'(0)|g_j|^2 \geq 0$. Otherwise we denote $|g|^{2}=x > 0$ and $w = g_{j} \overline{g} \in \mathbb{C}$ then (\ref{localp1}) implies  $\Psi \geq 0$ finishing the proof of $C'(s) \geq 0$.

Recall that the condition $C'(s)\geq 0$ for $s \in  (0,1)$ implies $C(0)\leq C(1)$, i.e.,  
\begin{align*}
\int_{\mathbb{R}^{k}} F ( M (|T_{z} f|^{2}+\varepsilon)) d\gamma \leq F \left(\int_{\mathbb{R}^{k}} M(|f|^{2}+\varepsilon)d\gamma\right)
\end{align*}
holds for all $\varepsilon>0$. As $\varepsilon \to 0$ by continuity we have $M(|f|^{2}+\varepsilon) \to M(|f|^{2})$ and $F(M(|T_{z}f|^{2}+\varepsilon)) \to F(M(|T_{z}f|^{2}))$ everywhere, therefore, the fact that $F$ and $M$ satisfy growth condition (\ref{growthC}) together with Lebesgue dominated convergence theorem completes the proof of Lemma~\ref{pp1}. 
\end{proof}

Before we proceed to the next lemma, we would like to point out  the following 
\begin{proposition}\label{utver1}
If $F\in C^{4}((0, \infty))$ and  $F', F''>0$ on $(0, \infty)$,  then the map 
\begin{align}\label{pirveli1}
t \mapsto \frac{F'(t)}{F''(t)} \quad \text{is concave on} \quad (0,\infty)
\end{align}
if and only if the map
\begin{align}\label{meore2}
B(t,y)  \mapsto \frac{y^{2}F''(t)}{F'(t)} \quad \text{is convex on} \quad (0, 
\infty)\times \mathbb{R}.
\end{align}
\end{proposition}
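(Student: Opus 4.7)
\medskip

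\noindent\textbf{Proof plan.} Set $G(t) := F'(t)/F''(t)$, which lies in $C^{2}((0,\infty))$ (using $F\in C^{4}$) and is strictly positive (since $F',F''>0$). Then $B(t,y) = y^{2}/G(t)$, and the proposition reduces to the assertion that $G$ is concave on $(0,\infty)$ if and only if $(t,y)\mapsto y^{2}/G(t)$ is convex on $(0,\infty)\times\mathbb{R}$. My plan is to verify this by directly computing the Hessian of $B$ on $(0,\infty)\times\mathbb{R}$ and showing that convexity of $B$ is controlled by a single scalar quantity, namely the sign of $-G''$.

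A short direct differentiation gives
\begin{align*}
B_{tt}(t,y) = \frac{y^{2}\bigl(2G'(t)^{2}-G(t)G''(t)\bigr)}{G(t)^{3}}, \qquad B_{yy}(t,y) = \frac{2}{G(t)}, \qquad B_{ty}(t,y) = -\frac{2y\,G'(t)}{G(t)^{2}}.
\end{align*}
The key observation I would highlight is that the cross term cancels the $G'^{2}$ contribution in the determinant:
\begin{align*}
B_{tt}B_{yy}-B_{ty}^{2} = \frac{2y^{2}(2G'^{2}-GG'')}{G^{4}}-\frac{4y^{2}G'^{2}}{G^{4}} = \frac{-2y^{2}G''(t)}{G(t)^{3}}.
\end{align*}
Since $G>0$, this determinant has the same sign as $-G''(t)$ (and vanishes when $y=0$).

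From this, both implications are immediate. If $G$ is concave, i.e.\ $G''\leq 0$, then the determinant is nonnegative, $B_{yy}>0$, and $B_{tt}\geq 0$ (because $2G'^{2}-GG''\geq 0$ is a sum of nonnegative terms when $G''\leq 0$), so the Hessian of $B$ is positive semidefinite everywhere and $B$ is convex. Conversely, if $B$ is convex on $(0,\infty)\times\mathbb{R}$, then $B_{tt}B_{yy}-B_{ty}^{2}\geq 0$ at every point, so $-2y^{2}G''(t)/G(t)^{3}\geq 0$; choosing any $y\neq 0$ forces $G''(t)\leq 0$ for every $t>0$, i.e.\ $G$ is concave.

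There is essentially no technical obstacle here beyond checking the cancellation in the determinant; the reason the $C^{4}$-hypothesis enters is precisely to ensure that $G=F'/F''$ admits a classical second derivative so that convexity of $B$ can be characterized pointwise via its Hessian rather than via a weaker distributional formulation.
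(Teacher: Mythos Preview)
Your proof is correct and follows essentially the same route as the paper: both compute the Hessian determinant of $B$ and show it has the sign of $-(F'/F'')''$, with $B_{yy}>0$ taking care of the remaining positive-semidefiniteness condition. Your substitution $G=F'/F''$ (so $B=y^{2}/G$) makes the algebra cleaner than the paper's direct expansion in $F^{(1)},\ldots,F^{(4)}$, but the argument is the same.
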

\begin{proof}
Indeed, (\ref{meore2}) holds if and only $\mathrm{det} (\mathrm{Hess}(B)) \geq 0$. We have 
\begin{align*}
\mathrm{det} (\mathrm{Hess}(B)) &=2y^{2} \left(\frac{F''(t)}{F'(t)}\frac{d^{2}}{dt^{2}} \frac{F''(t)}{F'(t)} - 2 \left(\frac{d}{dt} \frac{F''(t)}{F'(t)} \right)^{2} \right)\\
&\frac{2y^{2}}{(F^{(1)})^{3}} \left(F^{(1)}F^{(2)}F^{(4)}+
(F^{(2)})^{2}F^{(3)}-2F^{(1)}(F^{(3)})^{2} \right),
\end{align*}
where $F^{(j)}$ denotes $j$'th derivative of $F(t)$ with respect to $t$. Also notice that 
\begin{align*}
    \frac{d^{2}}{dt^{2}} \frac{F'(t)}{F''(t)} = -\frac{1}{(F^{(2)})^{3}}(F^{(1)}F^{(2)}F^{(4)}+(F^{(2)})^{2}F^{(3)}-2F^{(1)}(F^{(3)})^{2}).
\end{align*}
Thus  the inequality $\mathrm{det} (\mathrm{Hess}(B)) \geq 0$ is the same as  $\frac{d^{2}}{dt^{2}} \frac{F'(t)}{F''(t)} \leq 0$ finishing the proof of the proposition. 
\end{proof}

\begin{lemma}
 Let nonnegative $Q, P \in C^{2}((0, \infty))\cap C([0, \infty))$ be such that $Q', P'>0$ on $(0, \infty)$, and both $P$ and $F:=Q\circ P^{-1}$ satisfy growth condition (\ref{growthC}). If either
 \begin{align*}
    F''(x)>0 \quad \text{and} \quad x \mapsto F'(x)/F''(x) \quad \text{is concave on} \quad (0,\infty), 
\end{align*}
or $F'' \equiv 0$ on $(0,\infty)$, and the local condition 
\begin{align}\label{localp2}
		|wz|^{2} - (\Re\, wz)^{2} + \frac{xQ''(x)}{Q'(x)} (\Re\, wz)^{2} \leq |w|^{2} - (\Re\, w)^{2} + \frac{xP''(x)}{P'(x)} (\Re\, w)^{2}
\end{align}
 holds for all $x>0$, all $w\in \mathbb{C}$, and some $z \in \mathbb{C}$, $|z|\leq 1$,   then we have
 \begin{align}\label{globp2}
		Q^{-1}\left(\int_{\mathbb{R}^{n}} Q(|T_{z} f|) d\gamma \right) \leq P^{-1}\left(\int_{\mathbb{R}^{n}} P(|f|)d\gamma \right).
 \end{align}
 holds for all polynomials $f : \mathbb{R}^{k} \mapsto \mathbb{C}$, and all $k\geq 1$. 
\end{lemma}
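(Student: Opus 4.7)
The plan is to derive this lemma from Lemma~\ref{pp1} by a clean substitution. Specifically, take $M(x) := P(\sqrt{x})$. Since $F = Q\circ P^{-1}$ we have $F(M(x)) = Q(\sqrt{x})$, so $F(M(|T_z f|^2)) = Q(|T_z f|)$ and $M(|f|^2) = P(|f|)$. Thus the conclusion $\int F(M(|T_z f|^2))\,d\gamma \leq F(\int M(|f|^2)\,d\gamma)$ of Lemma~\ref{pp1} reads $\int Q(|T_z f|)\,d\gamma \leq F(\int P(|f|)\,d\gamma)$; applying $Q^{-1}$ and using $Q^{-1}\circ F = P^{-1}$ delivers (\ref{globp2}). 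So the task reduces to checking that $M$ satisfies the hypotheses of Lemma~\ref{pp1}.

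The growth condition is routine: differentiating $M(x) = P(\sqrt{x})$ gives $M'(x) = P'(\sqrt{x})/(2\sqrt{x})$ and $M''(x) = (\sqrt{x}P''(\sqrt{x}) - P'(\sqrt{x}))/(4x^{3/2})$, so (\ref{growthC}) for $P$ transfers to $M$. The convexity hypothesis on $F$ is assumed directly (or $F'' \equiv 0$; see below).

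The heart of the proof is the algebraic identification of the two local conditions. Write $t := \sqrt{x}$, and use the relations $F'(P(t)) = Q'(t)/P'(t)$ and $F''(P(t))/F'(P(t)) = Q''(t)/(P'(t)Q'(t)) - P''(t)/(P'(t))^2$ (obtained by differentiating $F(P(t)) = Q(t)$ twice). Substituting these together with $M'(x) = P'(t)/(2t)$, $M''(x) = (tP''(t)-P'(t))/(4t^3)$, $(M'(x))^2 = P'(t)^2/(4t^2)$, and $M'(x)/x = P'(t)/(2t^3)$ into (\ref{localp1}), and then multiplying through by the positive factor $4t^3/P'(t)$, the left-hand side collapses after cancellation to
\begin{equation*}
|w|^2 - (\Re w)^2 + \tfrac{tP''(t)}{P'(t)}(\Re w)^2 \;-\; \bigl( |wz|^2 - (\Re wz)^2 + \tfrac{tQ''(t)}{Q'(t)}(\Re wz)^2 \bigr),
\end{equation*}
which is precisely the gap in (\ref{localp2}). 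Hence (\ref{localp2}) at every $t>0$ is equivalent to (\ref{localp1}) for $M$ at every $x>0$.

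With both hypotheses verified, Lemma~\ref{pp1} gives the result when $F''>0$ and $F'/F''$ is concave. For the degenerate case $F''\equiv 0$, the function $F$ is affine, so the term $-F''(A)(I_s^u \tilde M'(|g|^2)\Re(zg_j\bar g))^2$ in (\ref{closetocondition}) vanishes identically; the double Jensen step in the proof of Lemma~\ref{pp1} is no longer needed, and the remaining terms in $\mathcal{L}/F'(A)$ are nonnegative pointwise by the local condition (reduced as above). Thus $C'(s) \geq 0$ still holds and the argument of Lemma~\ref{pp1} goes through. The main obstacle is the algebraic verification that the two local conditions coincide under $M(x) = P(\sqrt{x})$: the cancellation producing the factors $tP''/P'$ and $tQ''/Q'$, and the disappearance of the explicit $1/x$ factor in (\ref{localp1}), are not obvious a priori and only materialize after the substitution is carried out carefully.
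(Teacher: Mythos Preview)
Your proof is correct and follows essentially the same route as the paper: the substitution $M(x)=P(\sqrt{x})$, the chain-rule computations for $M',M'',F',F''$, and the multiplication by $4t^{3}/P'(t)$ to reduce (\ref{localp1}) to (\ref{localp2}) all match the paper's argument. Your explicit handling of the degenerate case $F''\equiv 0$ (bypassing the Jensen step since the offending term vanishes) is a small addition that the paper defers to Remark~\ref{remarka1}, but otherwise the two proofs coincide.
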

\begin{proof}
    Let us refwrite (\ref{globp2}) as follows 
    \begin{align}\label{kideve}
\int_{\mathbb{R}^{k}} F ( M (|T_{z} f|^{2})) d\gamma \leq F \left(\int_{\mathbb{R}^{k}} M(|f|^{2})d\gamma\right),
    \end{align}
    where $M(x)=P(\sqrt{x})$, and $F(x) = Q\circ P^{-1}(x)$. By Lemma~\ref{pp1} the inequality (\ref{kideve}) holds if $M$ and $F$ satisfy the conditions in Lemma~\ref{pp1}. All conditions trivially hold except one needs to check the local condition (\ref{localp1}). We claim that (\ref{localp1}) is the same as (\ref{localp2}). Indeed, we have 
    \begin{align*}
    &M'(x) = \frac{1}{2} \frac{P'(\sqrt{x})}{x^{1/2}},\\
    &M''(x) = P''(\sqrt{x}) \frac{1}{4x}-P'(\sqrt{x}) \frac{1}{4x^{3/2}},\\
    &F'(x) = \frac{Q'(P^{-1}(x))}{P'(P^{-1}(x))},\\
    &F''(x) = \frac{Q''(P^{-1}(x))}{(P'(P^{-1}(x)))^{2}}-\frac{Q'(P^{-1}(x))}{(P'(P^{-1}(x)))^{3}} P''(P^{-1}(x)).
    \end{align*}
Thus 
\begin{align*}
&\frac{F''(M(x))}{F'(M(x))} (M'(x))^{2} = 
 \left(\frac{Q''(\sqrt{x})}{P'(\sqrt{x})Q'(\sqrt{x} )}-\frac{P''(\sqrt{x})}{(P'(\sqrt{x}))^{2}}\right) (M'(x))^{2}\\
 &=\frac{1}{4x} \left(\frac{Q''(\sqrt{x})P'(\sqrt{x})}{Q'(\sqrt{x})}-P''(\sqrt{x})\right),
\end{align*}
and the local condition (\ref{localp1}) rewrites in terms of $P$ and $Q$ as follows
\begin{align}
&-\frac{1}{4x} \left(\frac{Q''(\sqrt{x})P'(\sqrt{x})}{Q'(\sqrt{x})}-P''(\sqrt{x})\right)(\Re zw)^{2}  \nonumber\\
&+\left(P''(\sqrt{x}) \frac{1}{4x}-P'(\sqrt{x}) \frac{1}{4x^{3/2}}\right)((\Re w)^{2} - (\Re zw)^{2}) \nonumber \\
&+\frac{1-|z|^{2}}{4} \frac{P'(\sqrt{x})}{x^{3/2}}|w|^{2} \geq 0. \label{simp1}
\end{align}

If we multiply both sides of the inequality (\ref{simp1}) by a positive number $\frac{4x^{3/2}}{P'(\sqrt{x})}$, and replace $\sqrt{x}$ by $x$ we obtain the inequality (\ref{localp2}). 
\end{proof}

\begin{lemma}
The local condition (\ref{localp2}) holds  if and only if 
\begin{align*}
    	\left|\left(\frac{xQ''(x)}{Q'(x)}-1\right)z^2-\left(\frac{xP''(x)}{P'(x)}-1\right)\right|\leq \left(\frac{xP''(x)}{P'(x)}+1\right)-\left(\frac{xQ''(x)}{Q'(x)}+1\right)|z|^2
\end{align*}
holds for all $x>0$. 
\end{lemma}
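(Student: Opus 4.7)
The plan is to parametrize $w$ and $z$ in polar form, reduce (\ref{localp2}), viewed as a quadratic form in $w$, to a one-dimensional trigonometric inequality, and then apply the standard sinusoid estimate $\max_\psi(A\cos\psi+B\sin\psi)=\sqrt{A^2+B^2}$. Throughout, I abbreviate $a:=xP''(x)/P'(x)$ and $b:=xQ''(x)/Q'(x)$.

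First, using $|wz|^2=|w|^2|z|^2$, I would rewrite (\ref{localp2}) as
\begin{align*}
(1-|z|^2)|w|^2+(a-1)(\Re w)^2-(b-1)(\Re wz)^2\geq 0,
\end{align*}
and then set $w=re^{i\theta}$, $z=\rho e^{i\phi}$. The case $r=0$ is trivial, so I divide by $r^2$ and apply $\cos^2 t=(1+\cos 2t)/2$ to both $\cos^2\theta$ and $\cos^2(\theta+\phi)$. The constant parts collapse to $(a+1)-(b+1)\rho^2$, and with $\psi:=2\theta$ the full condition (\ref{localp2}) (for all $w\in\mathbb{C}$) becomes
\begin{align*}
(a+1)-(b+1)\rho^2\geq (b-1)\rho^2\cos(\psi+2\phi)-(a-1)\cos\psi\quad\text{for all }\psi\in\mathbb{R}.
\end{align*}

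Next, I would expand $\cos(\psi+2\phi)$ so that the right-hand side takes the form $A\cos\psi+B\sin\psi$ with $A=(b-1)\rho^2\cos 2\phi-(a-1)$ and $B=-(b-1)\rho^2\sin 2\phi$. The sinusoid estimate then converts the trigonometric inequality into $(a+1)-(b+1)\rho^2\geq\sqrt{A^2+B^2}$. A short expansion gives
\begin{align*}
A^2+B^2=(a-1)^2-2(a-1)(b-1)\rho^2\cos 2\phi+\bigl((b-1)\rho^2\bigr)^2,
\end{align*}
and since $z^2=\rho^2(\cos 2\phi+i\sin 2\phi)$, this is exactly $|(b-1)z^2-(a-1)|^2$. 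Recalling $\rho^2=|z|^2$ and recombining yields the claimed inequality, with both directions of the equivalence obtained simultaneously since $\psi$ sweeps out all of $\mathbb{R}$ as $\theta$ does.

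The argument is entirely elementary; the only steps requiring care are the bookkeeping in the collapse of the constant terms to $(a+1)-(b+1)\rho^2$ and the verification that $A^2+B^2=|(b-1)z^2-(a-1)|^2$. I do not anticipate any analytic obstacle here, as no convexity, growth, or sign hypothesis on $a,b,z$ is invoked beyond what is needed to interpret the two sides as real numbers.
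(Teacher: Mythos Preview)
Your proof is correct and follows essentially the same approach as the paper's: both normalize to $|w|=1$, rewrite the condition so that the $w$-dependence is through a single sinusoid (the paper writes this as $\Re((Lz^2-K)w^2)$ with $K=a-1$, $L=b-1$, you write it as $A\cos\psi+B\sin\psi$ with $\psi=2\theta$), and then optimize over $w$ using the identity $\max_{|w|=1}\Re(cw^2)=|c|$, which is exactly your sinusoid estimate in complex notation. The only difference is cosmetic---polar/trigonometric versus complex-algebraic bookkeeping.
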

\begin{proof}
By homogeneity we can assume  $|w|=1$ in (\ref{localp2}), hence the local condition (\ref{localp2}) rewrites as  
\begin{equation*}
	|z|^2-1\leq\left(\frac{xP''(x)}{P'(x)}-1\right)(\Re w)^2-\left(\frac{xQ''(x)}{Q'(x)}-1\right)(\Re wz)^2.
\end{equation*}
Set $K=\frac{xP''(x)}{P'(x)}-1$ and $L=\frac{xQ''(x)}{Q'(x)}-1$. So for every  $|w|=1$ we have
\begin{equation*}
	2|z|^2-2\leq K\frac{w^2+2+\bar{w}^2}{2}-L\frac{(wz)^2+2|z|^2+(\bar{wz})^2}{2}.
\end{equation*}
This is the same as
\begin{equation*}
	\Re\left((Lz^2-K)w^2\right)=\frac{1}{2}(Lz^2-K)w^2+\frac{1}{2}(L\bar{z}^2-A)\bar{w}^2\leq (K+2)-(L+2)|z|^2.
\end{equation*}
Maximizing the left hand side over all $w \in \mathbb{C}, |w|=1,$ we obtain
\begin{equation*}
	\left|\left(\frac{xQ''(x)}{Q'(x)}-1\right)z^2-\left(\frac{xP''(x)}{P'(x)}-1\right)\right|\leq \left(\frac{xP''(x)}{P'(x)}+1\right)-\left(\frac{xQ''(x)}{Q'(x)}+1\right)|z|^2
\end{equation*}
completing the proof of the lemma. 
\end{proof}

\section{Necessity of the local condition (\ref{localpp})}
Assume $P, Q \in C^{3}((0, \infty))$. Rewrite the inequality (\ref{globalpp}) as 
\begin{align}\label{Lglobal}    \int_{\mathbb{R}^{k}}F(M(|T_{z}f|^{2}))d\gamma \leq F\left( \int_{\mathbb{R}^{k}} M(|f|^{2})d\gamma\right)
\end{align}
where $F = Q\circ P^{-1}$, and $M(x)=P(\sqrt{x})$. It sufficies to show that (\ref{Lglobal}) for all polynomials $f$ implies 
\begin{align}\label{Llocal}
-\frac{F''(M(x))}{F'(M(x))}(M'(x)\Re (zw))^{2} + M''(x)((\Re w)^{2} - (\Re zw)^{2}) + \frac{1-|z|^{2}}{2} \frac{M'(x)}{x}|w|^{2} \geq 0
\end{align}
holds for all $w \in \mathbb{C}$ and all $x>0$. The condition (\ref{Llocal}) is the same as (\ref{localp1}) and we have explained in Section~\ref{Proof} that this condition rewrites as the local condition (\ref{localpp}) after expressing the left hand side of (\ref{Llocal}) in terms of $P$ and $Q$, and optimizing over all $w \in \mathbb{C}$. 

Let $J(x):=F(M(x)))$. Recall that if $h \in C^{3}((0, \infty))$ then for each $t_{0}>0$ there exists $\delta = \delta(t_{0})>0$ such that 
\begin{align*}
h(t) = h(t_{0})+h'(t_{0})(t-t_{0})+\frac{h''(t_{0})}{2}(t-t_{0})^{2}+\mathcal{O}((t-t_{0})^{3})   
\end{align*}
holds for all $t>0$ with $|t-t_{0}|<\delta(t_{0})$. 

By taking $f$ to depend only on $x_{1}$ variable we can assume $k=1$ in (\ref{Lglobal}). Notice that if $h$ satisfies the growth condition (\ref{growthC}) then for any $N, C>0$ we have $\int_{|\varepsilon x|>C} |h(x)| d\gamma(x) = \mathcal{O}(\varepsilon^{N})$ as $\varepsilon \to 0$.  Fix $a,b \in \mathbb{C}$, $a\neq 0$, and let $f(x) = a+b\varepsilon x$, where $\varepsilon >0$. Clearly $T_{z} f(x)= a+zb\varepsilon x$. Thus we obtain that there exists $\delta = \delta(a,b,z)>0$ such that 
\begin{align*}
&\int_{\mathbb{R}} J(|a+zb\varepsilon x|^{2})d\gamma(x) = \\
&\int_{|\varepsilon x|\leq \delta} J(|a|^{2})+J'(|a|^{2})2 \Re (\bar{a}bz) \varepsilon x + (J''(|a|^{2})2(\Re (\bar{a}bz))^{2}+J'(|a|^{2}) |bz|^{2})|\varepsilon x|^{2} d\gamma(x) + \mathcal{O}(\varepsilon^{3}) =\\
&=J(|a|^{2})+(J''(|a|^{2})2(\Re (\bar{a}bz))^{2}+J'(|a|^{2}) |bz|^{2})\varepsilon^{2}+\mathcal{O}(\varepsilon^{3})
\end{align*}
as $\varepsilon$ goes to zero. Similarly, there exists $\tilde{\delta} = \tilde{\delta}(a,b)$ such that 
\begin{align*}
&\int_{\mathbb{R}} M(|a+b\varepsilon x|^{2})d\gamma(x) = \\
&\int_{|\varepsilon x|\leq \tilde{\delta}} M(|a|^{2})+M'(|a|^{2})2 \Re (\bar{a}b) \varepsilon x + (M''(|a|^{2})2(\Re (\bar{a}b))^{2}+M'(|a|^{2}) |b|^{2})|\varepsilon x|^{2} d\gamma(x) + \mathcal{O}(\varepsilon^{3}) =\\
&=M(|a|^{2})+(M''(|a|^{2})2(\Re (\bar{a}bz))^{2}+M'(|a|^{2}) |b|^{2})\varepsilon^{2}+\mathcal{O}(\varepsilon^{3}).    
\end{align*}
For $t_{0}=M(|a|^{2})$ we have 
\begin{align*}
 F(t_{0}+\varepsilon_{1}) = F(t_{0}) +F'(t_{0})\varepsilon_{1}+\mathcal{O}(\varepsilon_{1}^{2})
\end{align*}
as $\varepsilon_{1} \to 0^{+}$. Thus 
\begin{align*}
&F\left( \int_{\mathbb{R}} M(|a+b\varepsilon x|^{2}) d\gamma\right)   = \\
&F(M(|a|^{2}))+F'(M(|a|^{2}))(M''(|a|^{2})2(\Re (\bar{a}b))^{2}+M'(|a|^{2}) |b|^{2})\varepsilon^{2} + \mathcal{O}(\varepsilon^{3}).
\end{align*}
Therefore, the inequality (\ref{Lglobal}) implies 
\begin{align*}
&J(|a|^{2})+(J''(|a|^{2})2(\Re (\bar{a}bz))^{2}+J'(|a|^{2}) |bz|^{2})\varepsilon^{2}+\mathcal{O}(\varepsilon^{3}) \leq \\
&F(M(|a|^{2}))+F'(M(|a|^{2}))(M''(|a|^{2})2(\Re (\bar{a}b))^{2}+M'(|a|^{2}) |b|^{2})\varepsilon^{2} + \mathcal{O}(\varepsilon^{3}).
\end{align*}
Canceling the term $J(|a|^{2})$,  dividing both sides of the inequality by $\varepsilon^{2}$,  and taking $\varepsilon \to 0$ we obtain 
\begin{align}\label{vse1}
    J''(|a|^{2})2(\Re (\bar{a}bz))^{2}+J'(|a|^{2}) |bz|^{2} \leq F'(M(|a|^{2}))(M''(|a|^{2})2(\Re (\bar{a}b))^{2}+M'(|a|^{2}) |b|^{2}).
\end{align}
Denoting $w = \bar{a}b$, $|a|^{2}=x$, and multiplying the both sides of the inequality by $|a|^{2}$ we obtain 
\begin{align}\label{vse2}
    2J''(x)x (\Re (wz))^{2}+J'(x)|wz|^{2} \leq F'(M(x))(xM''(x)2(\Re(w))^{2}+M'(x))|w|^{2}.
\end{align}
After substituting the values $J'(x) = F'(M(x))M'(x)$ and $J''(x) = F''(M(x))(M'(x))^{2}+F'(M(x))M''(x)$  into (\ref{vse2}) we can rewrite (\ref{vse2}) as 
\begin{align}
    &2x[F''(M(x))(M'(x))^{2}+F'(M(x))M''(x)] (\Re (wz))^{2}+F'(M(x))M'(x)|wz|^{2} \leq \nonumber \\
    &F'(M(x))(xM''(x)2(\Re(w))^{2}+M'(x))|w|^{2}. \label{vse3}
\end{align}
Dividing both sides of the inequality (\ref{vse3}) by $2xM'(x)$ we obtain (\ref{Llocal}). Notice that $x=|a|^{2}>0$ and $w=\bar{a}b \in \mathbb{C}$ can be chosen to be arbitrary positive and complex number correspondingly, therefore, this completes the proof of the lemma.

\section*{Acknowledgements}
P.I. was supported in part by NSF CAREER-DMS-2152401.

\end{document}